\definecolor{my_color}{rgb}{0,0.5,0.5}
\definecolor{MIXT}{rgb}{0.4,0.3,0.6}
\numberwithin{equation}{section}
\newtheorem{thm}{Theorem}[section]
\newtheorem{lm}[thm]{Lemma} 
\newtheorem{prop}[thm]{Proposition}
\theoremstyle{remark}
\newtheorem{rmk}[thm]{Remark}
\theoremstyle{definition}
\newtheorem{ex}[thm]{Example}
\newcommand {\g}{{\mathfrak g}}
\newcommand {\el}{{\mathfrak l}}
\newcommand {\p}{{\mathfrak p}}
\newcommand {\q}{{\mathfrak q}}
\newcommand {\te}{{\mathfrak t}}
\newcommand {\gln}{{\mathfrak{gl}}_n}
\newcommand {\sln}{{\mathfrak{sl}}_n}
\newcommand {\esi}{\varepsilon}
\newcommand {\ap}{\alpha}
\newcommand{\Gc}{\Gamma^{\sf C}}
\newcommand {\ad}{{\mathrm{ad}}}
\newcommand {\ind}{{\mathsf{ind\,}}}
\newcommand {\rk}{{\mathsf{rk\,}}}
\newcommand{\sig}{{\langle\sigma\rangle}}
\newcommand {\e}{\boldsymbol{e}}
\newcommand {\beq}{\begin{equation}}
\newcommand {\eeq}{\end{equation}}
\newcommand {\bbk}{\Bbbk}
\newcommand{\gt}{\mathfrak}
\newcommand{\wb}{\widehat{\gt b}}
\newcommand{\id}{{\rm id}}
\newcommand {\cE}{{\mathcal E}}
\newcommand {\cv}{{\mathcal V}}
\newcommand {\cK}{{\mathcal K}}
\newcommand {\cL}{{\mathcal L}}
\newcommand {\BZ}{{\mathbb Z}}
\newcommand {\Z}{{\mathbb Z}}
\newcommand {\BR}{{\mathbb R}}
\newcommand{\wg}{{\widehat{\gt g}}}
\newcommand{\wq}{{\widehat{\gt q}}}
\renewcommand{\le}{\leqslant}
\renewcommand{\ge}{\geqslant}
\font\euszw=eusm10 scaled 1200%
\font\eusac=eusm7 scaled 1200%
\font\eusacc=eusm7 scaled 1000%
\begin{document}
\hfill {\scriptsize April 2, 2025} 
\vskip1ex

\title[Combinatorial interpretation of the index]{A combinatorial approach to the index \\
 of  seaweed subalgebras of 
Kac--Moody algebras}
\author[O.\,Yakimova]{Oksana Yakimova}
\address[O.Y.]{Institut f\"ur Mathematik, Friedrich-Schiller-Universit\"at Jena,  07737 Jena,
Deutschland}
\email{oksana.yakimova@uni-jena.de}
\keywords{index of Lie algebra, seaweed subalgebra, Kac--Moody algebra}
\subjclass[2020]{17B08, 17B67, 17B20}
\begin{abstract}
In 2000, Dergachev and Kirillov introduced subalgebras of "seaweed type" in $\gt{gl}_n$ (or $\gt{sl}_n$) and computed their index using certain graphs. 
Then seaweed subalgebras $\gt q\subset\gt g$ were defined by Panyushev for any reductive $\gt g$. 
A few years later Joseph generalised this notion to the setting of (untwisted) affine Kac--Moody algebras $\wg$. Furthermore, he proved that the index of such a seaweed can be computed by the same formula that had been known for $\gt g$. In this paper, we construct graphs that 
help to understand the index of a seaweed $\gt q\subset\wg$, where $\wg$ is of affine  type {\sf A} or {\sf C}. 
\end{abstract}
\maketitle

\section{Introduction}   
\label{sect:intro}

\noindent 
Since the pioneering work of Dergachev and 
Kirillov~\cite{dk00}, {\it seaweed subalgberas} 
of reductive Lie algebras have attracted a great deal of attention
\cite{Dima01,ty-AIF,Dima03-b,jos,jos2,MY12,SW-C,mD}.  The seaweeds form a 
wide class of Lie algebras, which includes all parabolics and their Levi subalgebras. 
Unlike an arbitrary Lie algebra, it is reasonable to study the coadjoint action of a seaweed. An important 
numerical characterisation of the  
coadjoint representation is the {\it index}. In this paper, we study  the index of a  finite-dimensional seaweed in the Kac--Moody setting 
from a combinatorial point of view. 

The index of a Lie algebra $\q$, $\ind\q$, is the minimal dimension of a stabiliser for the 
coadjoint representation of $\q$. It  can be regarded as a generalisation of the notion of rank. That is, 
$\ind\q$ equals the rank of $\q$, if $\q$ is reductive.  If $\gt q$ is non-reductive, then a calculation of $\ind\gt q$ can be difficult. 
Nevertheless, there are classes of Lie algebras, for which the index can be determined. 
The seaweeds comprise one of these classes. 

For $\gln$, the seaweed subalgebras have been introduced in \cite{dk00} and 
their index 
has been computed using certain {graphs},
which are said to be type-{\sf A} meander graphs. 
A general definition suited for arbitrary reductive Lie algebras 
$\g$ appears in~\cite{Dima01}. Namely, if $\p_1,\p_2\subset \g$ are parabolic subalgebras such that 
$\p_1+\p_2=\g$, then $\q=\p_1\cap\p_2$ is called a seaweed in $\g$. (For this reason, some people 
began to use later the term ``biparabolic subalgebra" for such $\q$.) 

An inductive (reduction) procedure for computing the index of the seaweeds in the classical Lie 
algebras is introduced in~\cite{Dima01}. It helps to answer some subtle questions on the coadjoint 
action~\cite{Dima03-b,MY12}.  A general algebraic formula for the index of the seaweeds had been proposed 
by Tauvel and Yu
\cite[Conj.\,4.7]{ty-AIF} and then proved by Joseph \cite[Section\,8]{jos}. For these reasons we call it the 
Tauvel--Yu--Joseph ( {\it TYJ} for short) formula. 
Then it was shown in \cite{jos2} that the same formula applies in the setting of (untwisted) affine Kac--Moody algebras $\wg$. 

Let now $\gt p_1\subset\wg$ be a proper  standard parabolic subalgebra and $\gt p_2\subset\wg$ the opposite of 
a proper  standard parabolic. Then $\wq=\gt p_1\cap\gt p_2$ is a finite-dimensional Lie algebra, see  Section~\ref{KM} for an explanation. We say that $\wq$ is a seaweed in $\wg$. 
If $\wg$ is of affine type {\sf A}, then $\wq$ has an easy description as a semi-direct product, see Proposition~\ref{pA}.  

By  construction, $\wg$ is a $2$-dimensional extension of the loop algebra $\gt g[t,t^{-1}]=\gt g\otimes\bbk[t,t^{-1}]$,
see~\eqref{KM-sum}. The central part of this extension,  $\bbk C$,  is contained in $\wq$, but not in 
$[\wq,\wq]$  \cite[7.17\,\&\,7.20]{jos2}. Therefore there is a subalgebra $\bar\q\subset\wq$ such that 
$\bar\q\cong\wq/\bbk C$ and $\wq=\bar\q\oplus\bbk C$. Clearly $\ind\wq=1+\ind\bar\q$. 
For $\gt g$ equal to $\gt{sl}_n$ or $\gt{sp}_{2n}$, we interpret $\ind\bar\q$ in terms of the 
type-{\sf A} and type-{\sf C} {\it affine meander graphs} $\Gamma$ and $\Gc$. Our proofs are based on the TYJ formula. 
 
The graphs $\Gamma$ and $\Gc$ have $n$ and $2n$ vertices, respectively, which are placed on a circle.  
Some edges are drawn outside of the circle and the other ones inside. The outside edges are defined by the first parabolic  
$\gt p_1$ and the inside ones by  $\gt p_2$. A particular attention must be paid to the position of the  centre $o$ of the circle, see 
Figure~\ref{rules}. Then one counts the numbers of segments and cycles in the graph and checks, whether $o$ lies inside of a cycle. 
This information is used in a combinatorial formula for the index, see Theorems~\ref{thm-A} and \ref{thmC}.  
 
Our combinatorial approach simplifies calculations of $\ind\bar\q$.  For example, if $\wq$ is the intersection of two maximal standard parabolics in $\widehat{\gt{sl}_n}$, then 
$$
\bar{\gt q}=(\gt s(\gt{gl}_d\oplus\gt{gl}_{n-d})\oplus\bbk d)\ltimes (2\bbk^d{\otimes}(\bbk^{n-d})^*),
$$ 
where 
$2\bbk^d{\otimes}(\bbk^{n-d})^*=\bbk^d{\otimes}(\bbk^{n-d})^*\oplus \bbk^d{\otimes}(\bbk^{n-d})^*$ is an Abelian ideal of $\bar\q$ and  $\ad(d)$ acts on its direct summands with eigenvalues $0$ and $-1$,
see Example~\ref{ex-max}.
We have 
$$
\ind\bar\q=\gcd(n,2d)-\iota,
$$
 where 
$\iota=0$ if $\gcd(n,2d)$ divides $d$ and $\iota=2$ otherwise, see Example~\ref{1}. The formula for $\iota$ shows that describing $\iota$ is a non-trivial task, which would be more difficult without a graph. 
 
 In Section~\ref{fin},  we explain that  the TYJ formula can be used for justifications of the graph interpretations for $\ind\gt q$, where $\gt q\subset\gt g$ is a usual, not affine, seaweed. The original type {\sf A} approach of \cite{dk00} is generalised
 in \cite{SW-C,mD} to all classical types. Although the TYJ formula is used in \cite{mD},  proofs in \cite{dk00,SW-C,mD}  rely heavily on the  inductive procedures, in particular, 
 on the one developed in \cite{Dima01}.

 Throughout the paper, the ground field $\bbk$ is of characteristic zero.
 
\vskip1.5ex

\noindent 
{\bf Acknowledgement.} I am grateful to Mamuka Jibladze for inspiring conversations.

\section{Seaweed subalgebras of Kac--Moody algebras } \label{KM}

Let $\g$ be a simple finite-dimensional non-Abelian Lie algebra.
We fix a Borel subalgebra $\gt b\subset \gt g$ and a Cartan subalgebra $\gt t\subset\gt b$. 
Set $r=\rk\gt g=\dim\gt t$. 
Let $\Pi=\{\alpha_1,\ldots,\alpha_r\}$ be the set of simple roots associated with 
$(\gt b,\gt t)$ and $\lambda\in\gt t^*$ the highest root.  

Let $\wg$ be an affine Kac--Moody algebra with a set of simple roots $\widehat{\Pi}=\Pi\sqcup\{\alpha_0\}$.
Then 
\begin{equation} \label{KM-sum}
\wg=\gt g[t,t^{-1}]\oplus \bbk C\oplus\bbk d,
\end{equation} 
where $C$ is a central element and $[d,\xi t^k]=k\xi t^k$.   Other commutator relations in $\wg$ are described in \cite[\S\,7.2]{kac}.
By the construction, $\gt h=\gt t\oplus \bbk C\oplus\bbk d$ is a Cartan subalgebra of $\wg$.

For a root $\alpha$ of $(\gt g,\gt t)$, let $e_\alpha\in\gt g$ be a root vector. 
Set $\widehat{\gt b}=(\gt b+\gt h)\oplus t\g[t]$ and $e_{-\alpha_0}=e_{\lambda} t^{-1}\in\g[t^{-1}]$.  
To a subset $S\subset \widehat{\Pi}$, one associates a standard parabolic $\gt p=\gt p(S)\subset\wg$,
which is generated by $\wb$ and $\{e_{-\beta}\mid \beta\in S\}$.  If $S=\widehat{\Pi}$, then clearly
$\gt p=\wg$. 

For convenience of the reader,  we give a description of $\gt p(S)$ with a proper subset $S$. 
Let $\gt p(S\cap \Pi)\subset\gt g$ be the standard parabolic associated with 
$S\cap\Pi$ and $\gt n(S\cap \Pi)$ the nilpotent radical of $\gt p(S\cap \Pi)$. Then let 
$\gt z\subset \gt n(S\cap \Pi)$ be the centre of $\gt n(S\cap \Pi)$. 

\begin{lm} \label{p}
Suppose that $S\ne \widehat{\Pi}$. \\[.2ex]
{\sf (i)} If $\alpha_0\not\in S$, then 
$\gt p(S)=t\gt g[t]+\gt h+\gt p(S\cap \Pi)$.    \\[.2ex]
{\sf (ii)} If $\ap_0\in S$, then $\gt p(S)=t\gt g[t]+\gt h+(\gt p(S\cap \Pi)\ltimes \gt z t^{-1})$, where  $\gt z t^{-1}$
is an Abelian ideal of $\gt p(S\cap \Pi)\ltimes \gt z t^{-1}$.
\end{lm}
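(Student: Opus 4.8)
The plan is to compute $\gt p(S)$ from the root-space description of parabolics in a Kac--Moody algebra. Since $\gt p(S)$ is generated by $\wb$ together with the $e_{-\beta}$, $\beta\in S$, it is the standard parabolic
\[
\gt p(S)=\gt h\oplus\bigoplus_{\gamma\in\widehat\Delta^+}\wg_\gamma\ \oplus\bigoplus_{\gamma\in\widehat\Delta^+\cap\,\BZ_{\ge0}S}\wg_{-\gamma},
\]
where $\widehat\Delta^+$ is the set of positive roots of $(\wg,\gt h)$ and $\BZ_{\ge0}S$ is the set of non-negative integer combinations of $S$. The positive part $\gt h\oplus\bigoplus_{\gamma>0}\wg_\gamma$ equals $\gt h\oplus\gt n^+\oplus t\gt g[t]$, so in both parts everything reduces to identifying the negative spaces $\wg_{-\gamma}$ with $\gamma\in\widehat\Delta^+\cap\BZ_{\ge0}S$. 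I will use that the positive roots of $\wg$ are the $\alpha\in\Delta^+$ and the $\alpha+k\delta$ with $k\ge1$, $\alpha\in\Delta\cup\{0\}$, with root space $\gt g_\alpha t^k$ (resp. $\gt t\,t^k$ if $\alpha=0$), where $\delta$ is the minimal imaginary root and $\alpha_0=\delta-\lambda$.

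For part (i), $S=S\cap\Pi\subseteq\Pi$, so every element of $\BZ_{\ge0}S$ has zero $\delta$--coordinate; hence $\widehat\Delta^+\cap\BZ_{\ge0}S=\Delta^+\cap\BZ_{\ge0}(S\cap\Pi)=\Delta^+_{S\cap\Pi}$, the positive roots of the Levi of $\gt p(S\cap\Pi)$. The added negative spaces are thus $\bigoplus_{\mu\in\Delta^+_{S\cap\Pi}}\gt g_{-\mu}$, and assembling them with $\gt h\oplus\gt n^+\oplus t\gt g[t]$ gives exactly $t\gt g[t]+\gt h+\gt p(S\cap\Pi)$.

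For part (ii), write $\gamma=\mu+c_0\alpha_0=c_0\delta+(\mu-c_0\lambda)$ with $\mu\in\BZ_{\ge0}(S\cap\Pi)$ and $c_0\ge0$, and for a root $\alpha=\sum_i a_i\alpha_i$ put $d(\alpha)=\sum_{\alpha_i\notin S\cap\Pi}a_i$. Because $S\ne\widehat\Pi$ forces $S\cap\Pi\subsetneq\Pi$, there is a simple root $\alpha_i\notin S\cap\Pi$, and for every root of $\gt g$ the $\alpha_i$--coefficient is bounded in absolute value by its coefficient $c_i>0$ in the highest root $\lambda$; applied to $\mu-c_0\lambda$ (whose $\alpha_i$--coefficient is $-c_0c_i$) this forces $c_0\le 1$, and $c_0=1$ requires $\lambda-\mu\in\Delta^+$. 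The part $c_0=0$ reproduces the finite parabolic as in (i). The part $c_0=1$ consists of the roots $\gamma=\delta-(\lambda-\mu)$ with $\lambda-\mu\in\Delta^+$, with negative root space $\wg_{-\gamma}=\gt g_{\lambda-\mu}\,t^{-1}$. Since $\lambda$ is the highest root, the roots $\lambda-\mu$ occurring here are exactly the positive roots $\rho$ with $d(\rho)=d(\lambda)=:m$, i.e. the top graded piece $\gt y:=\bigoplus_{d(\rho)=m}\gt g_\rho$ of $\gt n(S\cap\Pi)$. This piece is abelian, as $(\lambda-\mu)+(\lambda-\mu')$ has $d$--value $2m>m$ and is not a root, so also $[\gt g_{\lambda-\mu}t^{-1},\gt g_{\lambda-\mu'}t^{-1}]=0$.

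The main obstacle is the identification $\gt y=\gt z$, and this is the step I would write out in full. One inclusion is easy: $\gt y$ is an $\gt l$--submodule of $\gt n(S\cap\Pi)$ (where $\gt l$ is the Levi) and lies in $\gt z$ by the $d$--value computation just made. For the converse, decompose the $\gt l$--module $\gt z$ into irreducibles and let $e_\nu$ be a highest--weight vector of a summand. Being highest weight for $\gt l$ gives $\nu+\alpha_j\notin\Delta$ for all simple $\alpha_j\in S\cap\Pi$, while centrality in $\gt n(S\cap\Pi)$ gives $\nu+\alpha_j\notin\Delta$ for all simple $\alpha_j\notin S\cap\Pi$; hence $\nu+\alpha_j\notin\Delta$ for every simple root, so $\nu$ is a maximal root, and therefore $\nu=\lambda$. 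Thus each irreducible summand of $\gt z$ has highest weight $\lambda$; since $\lambda$ occurs with multiplicity one in $\gt g$, we get $\gt z=\gt y$. Finally $\gt z\,t^{-1}$ is $\ad$--stable under $\gt p(S\cap\Pi)$ because $\gt z$ is an ideal of $\gt n(S\cap\Pi)$ and an $\gt l$--submodule, and it is abelian; combining this with the finite parabolic yields $t\gt g[t]+\gt h+(\gt p(S\cap\Pi)\ltimes\gt z\,t^{-1})$, as claimed. Everything except the centre$=$top--piece identification is routine bookkeeping with the affine root system.
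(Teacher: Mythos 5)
Your proof is correct and follows essentially the same route as the paper: both arguments bound the $t^{-1}$-depth of $\gt p(S)$ using the coefficients of the highest root $\lambda$, and both identify the degree $-1$ piece with $\gt z$ by the same key step — any highest-weight vector $e_\nu$ of an $\gt l(S\cap\Pi)$-summand of $\gt z$ must satisfy $\nu+\alpha_j\notin\Delta$ for every simple root (blocked by the highest-weight property when $\alpha_j\in S\cap\Pi$ and by centrality when $\alpha_j\notin S\cap\Pi$), forcing $\nu=\lambda$. The only cosmetic differences are that you start from the standard root-space description of Kac--Moody parabolics and describe the degree $-1$ part as the top piece of the $d$-grading, whereas the paper derives the weight constraints directly and writes this subspace as the span of root spaces for roots in $\lambda+\langle S\cap\Pi\rangle_{\BZ}$ — the same subspace.
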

\begin{proof}
The positive part $t\gt g[t]$ of $\wg$ is contained in $\widehat{\gt b}$ and hence in any standard parabolic subalgebra. 
Since $\ap_0(d)=1$ and  $[d,\gt g]=0$, we have 
$\gt p(S)\cap \gt g=\gt p(S\cap\Pi)$. Part {\sf (i)} is now clear. 

Suppose $\ap_0\in S$. Then $\Pi\not\subset S$. Standard  facts from the theory of simple Lie algebras are that 
$\lambda=\sum_{i=1}^r n_i \ap_i$ with $n_i\ge 1$ and that
$\beta= \sum_{i=1}^r c_i \ap_i$ with $0\le c_i\le n_i$ for any positive root $\beta$ of $(\gt g,\gt t)$. 
Therefore no combination 
\[
-k\ap_0+\left(\sum_{i=1}^{r} d_i \ap_i\right)-\sum_{\ap_i\in S\cap\Pi} k_i \ap_i
\]
with $k\ge 2$ and $d_i, k_i\ge 0$ is an $\gt h$-weight of $\wg$. Hence 
$\gt p(S)\cap t^{-1}\gt g[t^{-1}]=:\gt p_{-1}$ is contained in $\gt g t^{-1}$. Since $\gt p(S)$ is a Lie 
subalgebra, we have $[\gt p(S\cap \Pi),\gt p_{-1}]\subset\gt p_{-1}$ and 
$[\gt p_{-1},\gt p_{-1}]=0$. 

We have $\gt p_{-1}=Vt^{-1}$, where $V$ is the sum of $\gt g_\beta$ with 
$\beta\in U:=\lambda +\langle S\cap\Pi\rangle_{\BZ}$. Hence $V$ is contained in the nilpotent radical $\gt n(S\cap\Pi)$. 
For any $\beta\in U$ and any $\ap_i\not\in S$, the sum $\beta+\ap_i$ is not a root of $\gt g$. Hence
$[e_{\ap_i},V]=0$. This is true for all elements of $\gt n(S\cap \Pi)$. Hence $V\subset\gt z$.

Let $\gt l(S\cap\Pi)\subset\gt p(S\cap\Pi)$ be the standard Levi subalgebra. 
Note that $V$ is a simple $\gt l(S\cap\Pi)$-module with the highest weight $\lambda$. 
Clearly $[\gt l(S\cap\Pi),\gt z]\subset\gt z$. 

Assume that $V_1\subset\gt z$ is a non-zero simple 
$\gt l(S\cap\Pi)$-module with a highest weight vector $e_\gamma$, where $\gamma\ne\lambda$.  
Since $\gamma\ne\lambda$, where is $\ap_i\in\Pi$ such that 
$[e_{\ap_i},e_\gamma]\ne 0$. If $\ap_i\in S$, then this contradicts the assumption that $\gamma$ is a
highest weight for  $\gt l(S\cap\Pi)$. If $\ap_i\not\in S$, then $e_{\ap_i}\in\gt n(S\cap\Pi)$ and 
$[e_{\ap_i},V_1]=0$, because $V_1\subset\gt z$. Thus $V=\gt z$. 
\end{proof}

Let $\omega$ be an involution of $\wg$ such that %
$\omega|_{\gt h}=-\id_{\gt h}$ and $\omega(\gt g t^k)=\gt g t^{-k}$ for all $k\in\mathbb Z$. 
Then the opposite parabolic $\gt p^-$ is defined by $\gt p^-=\omega(\gt p)$. 
Having two standard parabolics $\gt p=\gt p(S)$ and $\gt r=\gt p(S')$, we define a seaweed subalgebra 
$\wq=\gt p\cap\gt r^-=:\wq(S,S')$.   
The description of  Lemma~\ref{p} implies that $\wq$ is  finite-dimensional if and only if both $\gt p,\gt r$ are smaller than $\wg$, i.e., if 
$S\ne\widehat{\Pi}$ and $S'\ne\widehat{\Pi}$. 
In the following, we always 
assume that both subsets $S$ and $S'$ are proper. 
Clearly $C\in \wq$, but $C\not\in[\wq,\wq]$ by \cite[7.17\,\&\,7.20]{jos2}. This implies that 
$\ind\bar\q=\ind\wq-1$ for $\bar\q=\wq/\bbk C$. 

A formula for the index of $\bar\q$ is given in \cite[Sect.\,9.3]{jos2}. 
It is an analogue of the formula for the index of a seaweed in $\gt g$ suggested by Tauvel and Yu~\cite[Conj.\,4.7]{ty-AIF}
and proven initially in  \cite[Sect.\,8]{jos}. 

Let $\cK(\el(S))=:\cK(S)\subset\gt h^*$ be the cascade of strongly 
orthogonal roots (=\,{\it Kostant's cascade}) in the Levi subalgebra $\el(S)\subset\gt p(S)$, see \cite{jos77,ty-AIF} for the 
details. In particular, $\cK(\g)=\cK(\Pi)\subset\gt t^*$ is the cascade in  $\g$.
Set $\hat{\te}=\gt t\oplus\bbk d$. 
Let $\gt t_{\BR}\subset\gt t$ be the standard real form, i.e., $\ad(h)$ has real eigenvalues for each $h\in\gt t_{\BR}$. 
Then set  $\hat{\te}_{\BR}=\te_{\BR}\oplus\BR d$. Since each root of $\wg$ is zero on $C$, we may safely assume that 
$\cK(S)\subset\hat{\te}^*_{\BR}$. Then 
let $E_S$ be the $\BR$-linear span of $\cK(S)$ in
$\hat{\te}^*_{\BR}$. We have $\dim E_S=\# \cK(S)$, since the elements of a cascade are linearly independent. The TYJ formula reads now: 
\beq    \label{eq:tau-yu}
   \ind\bar{\q}(S,S')=|\widehat{\Pi}| + \dim E_S +\dim E_{S'}  -2\dim (E_S+E_{S'}) ,
\eeq
see~\cite[Conj.\,4.7]{ty-AIF}, \cite[Sect.\,8]{jos}, \cite[Sect.\,9.3]{jos2}, and also 
\cite[Eq.\,(4${\cdot}$2)]{mD}. 
If $S=S_1\sqcup S_2$, where the subsets $S_1$ and $S_2$ are orthogonal, then clearly 
$\cK(S)=\cK(S_1)\sqcup\cK(S_2)$.
 
We record the data
on the cascades in $\gln$ and $\gt{sp}_{2n}$. Let $\esi_i\in\gln^*$ with $1\le i\le n$ be the standard linear functions such that 
$\esi_i(A)=a_{ii}$ for a matrix $A=(a_{ij})$. We regard each $\esi_i$ also as a linear function on the 
standard Cartan subalgebra of $\gln$. Similarly for $\gt g=\gt{sp}_{2n}$, let 
$\{\esi_1,\ldots,\esi_n\}\subset\gt t^*$  be the commonly used basis.  
We have
\beq  \label{eq:K-Pi-gl}
       \cK({\gln})=\{\esi_i-\esi_{n+1-i}\mid i=1,\dots, \left\lfloor \frac{n}{2}\right\rfloor\} \,\text{ and } \ \# \cK({\gln})=\left\lfloor \frac{n}{2}\right\rfloor;
\eeq 
as well as  
\beq  \label{eq:K-Pi-sp}
 \cK({\gt{sp}_{2n}})=\{2\esi_i \mid i=1,\dots, n\} \,\text{ and } \ \# \cK({\gt{sp}_{2n}})=n.
\eeq

\subsection{Seaweed subalgebras in  type $\widetilde{{\sf A}}_r$} \label{sA}
Suppose that $\g=\gt{sl}_{r+1}$. 
Let $\wq(S,S')\subset\wg$ be a standard finite-dimensional affine seaweed. 
The extended Dynkin diagram of type $\widetilde{{\sf A}}_r$ 
is a cycle. Therefore we can change the enumeration of simple roots cyclicly. Since $S'$ is a proper subset
of $\widehat{\Pi}$, we may assume that $\ap_0\not\in S'$.   
Let $\gt r^-\subset\gt g$ be the parabolic subalgebra generated by $\gt b^-=\omega(\gt b)$ 
and $\{e_{\beta} \mid \beta\in S'\}$. 

\begin{prop}\label{pA}
Suppose $S'\subset\Pi$. Then 
 a finite-dimensional standard seaweed subalgebra $\wq=\wq(S,S')\subset\wg$   
 is isomorphic  either to $\gt q \oplus\bbk d\oplus\bbk C$ or to a semi-direct product 
$(\gt q \oplus\bbk d\oplus\bbk C)\ltimes \gt z t^{-1}$,  where $\gt q=\gt p(S\cap\Pi)\cap \gt r^-$ is 
a seaweed in $\g$ and $\gt z$ is the centre of $\gt n(S\cap\Pi)$. 
\end{prop}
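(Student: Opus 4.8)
The plan is to decompose $\wq=\gt p(S)\cap\gt p(S')^-$ explicitly using the structural description of the two parabolics provided by Lemma~\ref{p}, and then to read off the semi-direct product structure. Since $S'\subset\Pi$ (in particular $\ap_0\notin S'$), the opposite parabolic $\gt p(S')^-=\omega(\gt p(S'))$ satisfies, by Lemma~\ref{p}{\sf (i)} applied to $\gt p(S')$ and then conjugated by $\omega$, the description $\gt p(S')^-=t^{-1}\gt g[t^{-1}]+\gt h+\gt r^-$, where $\gt r^-\subset\gt g$ is the indicated parabolic in $\g$ and $\omega$ sends the positive part $t\gt g[t]$ to the negative part $t^{-1}\gt g[t^{-1}]$ and fixes $\gt h$. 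Thus the negative-$t$-degrees live in $\gt p(S')^-$ and the positive-$t$-degrees live in $\gt p(S)$; their intersection forces the degree-zero part $\gt g$ together with a possible contribution in degree $-1$ coming from $\gt p(S)$.

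First I would treat the case $\ap_0\notin S$. Here Lemma~\ref{p}{\sf (i)} gives $\gt p(S)=t\gt g[t]+\gt h+\gt p(S\cap\Pi)$, so all strictly-negative $t$-degrees are absent from $\gt p(S)$. Intersecting with $\gt p(S')^-$ kills all strictly-positive $t$-degrees as well, leaving only the degree-zero component $\gt h+(\gt p(S\cap\Pi)\cap\gt r^-)=\gt h+\gt q$. Since $\gt h=\gt t\oplus\bbk C\oplus\bbk d$ and $\gt t\subset\gt q$, this is precisely $\gt q\oplus\bbk d\oplus\bbk C$, giving the first alternative. The key point is simply that the two opposite parabolic conditions on the $t$-grading are complementary and together pin the intersection to degree zero.

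Next I would treat $\ap_0\in S$. Now Lemma~\ref{p}{\sf (ii)} gives $\gt p(S)=t\gt g[t]+\gt h+(\gt p(S\cap\Pi)\ltimes\gt z t^{-1})$, so the \emph{only} negative-degree part of $\gt p(S)$ is the Abelian piece $\gt z t^{-1}$ in degree $-1$. Intersecting with $\gt p(S')^-$: the degree-zero part again yields $\gt h+\gt q$, and since $\gt z t^{-1}\subset t^{-1}\gt g[t^{-1}]\subset\gt p(S')^-$, the whole of $\gt z t^{-1}$ survives. Hence $\wq=(\gt q\oplus\bbk d\oplus\bbk C)\oplus\gt z t^{-1}$ as vector spaces. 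To upgrade this to the claimed semi-direct product, I would verify that $\gt z t^{-1}$ is an ideal—using $[\gt z t^{-1},\gt z t^{-1}]=0$ (from Lemma~\ref{p}{\sf (ii)}, as these live in degree $-2$ which is not present) and $[\gt q\oplus\bbk d,\gt z t^{-1}]\subset\gt z t^{-1}$, the latter because $[\gt l(S\cap\Pi),\gt z]\subset\gt z$ already established in the proof of Lemma~\ref{p} and because $d$ acts by the scalar $-1$ on degree $-1$. The central element $C$ acts trivially, so $\gt z t^{-1}$ is an Abelian ideal and the product is as stated.

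The main obstacle I anticipate is the careful bookkeeping of exactly which negative-degree pieces of $\gt p(S)$ survive the intersection, and confirming that the degree-zero intersection $\gt p(S\cap\Pi)\cap\gt r^-$ really is the finite seaweed $\gt q$ in $\g$ with the correct $\gt b$- and $\gt b^-$-conventions. This requires matching the involution $\omega$ (which acts as $-\id$ on $\gt h$ and swaps $\gt g t^k\leftrightarrow\gt g t^{-k}$) against the definition of $\gt r^-$ as the parabolic generated by $\gt b^-=\omega(\gt b)$ and the $e_\beta$ with $\beta\in S'$, so that $\gt p(S')^-\cap\gt g=\gt r^-$ holds on the nose. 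Once the grading argument and this degree-zero identification are in place, the two alternatives and the ideal structure follow directly.
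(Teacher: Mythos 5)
Your proposal is correct and follows essentially the same route as the paper's own proof: apply Lemma~\ref{p} to both $\gt p(S)$ and $\gt p(S')$, transport the latter description through $\omega$ to get $\gt p(S')^-=t^{-1}\gt g[t^{-1}]+\gt h+\gt r^-$, and intersect. The paper compresses the remaining bookkeeping into ``the result follows,'' whereas you spell out the $t$-grading argument and the verification that $\gt z t^{-1}$ is an Abelian ideal --- details that are implicit in, but fully consistent with, the published argument.
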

\begin{proof}
By Lemma~\ref{p}, $\gt p(S')^-=t^{-1}\g[t^{-1}]+\gt h+\gt r^-$ and either 
$\gt p(S)=t\gt g[t]+\gt h+\gt p(S\cap \Pi)$ or 
$\gt p(S)=t\gt g[t]+\gt h+(\gt p(S\cap \Pi)\ltimes \gt z t^{-1})$. 
Since $\wq=\gt p(S)\cap\gt p(S')^-$, the result follows. 
\end{proof} 

\begin{ex}\label{ex-max}
Suppose that $S=\widehat{\Pi}\setminus\{\ap_d\}$ with $1\le d\le r$ and $S'=\Pi$.
Then 
\[
\gt p(S\cap\Pi)=\gt s(\gt{gl}_d\oplus\gt{gl}_{n-d})\ltimes (\bbk^d{\otimes}(\bbk^{n-d})^*)
\]
with $\gt z=\gt n(S\cap\Pi)=\bbk^d{\otimes}(\bbk^{n-d})^*$. By 
Proposition~\ref{pA}, we have 
$$
\bar{\gt q}(S,\Pi)=(\gt s(\gt{gl}_d\oplus\gt{gl}_{n-d})\oplus\bbk d)\ltimes (2\bbk^d{\otimes}(\bbk^{n-d})^*),
$$ 
where 
$2\bbk^d{\otimes}(\bbk^{n-d})^*=\gt z\oplus\gt z t^{-1}$, \,$[d,\gt z]=0$, and $\ad(d)$ acts  on $\gt z t^{-1}$ as  $-\id$. 
\end{ex}

\subsection{Construction of a graph in type $\widetilde{{\sf A}}_r$} \label{GA}
Suppose that $S=\{\alpha_i \mid i\not\in I\}$, $S'=\{\alpha_i \mid i\not\in I'\}$, where $I,I'\subset\{0,1,\ldots,r\}$. We put $n=r+1$ nodes on a 
circle, as vertices of a regular $n$-gon, labelling them consequently clockwise with the cosets $0+n\Z$,
$1+n\Z,\ldots, r+n\Z$. For $a\in\Z$, set $\overline{a}=a+n\Z$. 
Let $o$ be the centre of the circle. 

Suppose that 
$I=\{i_1,\ldots,i_a\}$, where $0\le i_1<i_2<\ldots < i_a\le r$. Then the node 
$\overline{i_1+1}$ is connected by an arc to the node $\overline{i_2}$ on the {\bf outside} of the circle. 
Say that this arc goes from $\overline{i_1+1}$ to $\overline{i_2}$. 
We continue on the outside joining the vertex  $\overline{i_1+2}$ with $\overline{i_2-1}$,  the vertex $\overline{i_1+3}$ with $\overline{i_2-2}$ and so  on. If $i_2-i_1$ is odd, then there is a middle node with no adjacent outside arc.  
For an arc of the graph going from $A$ to $B$, we call  the arc of the circle going from $A$ to $B$ clockwise
its {\it shadow}. 
It is essential that $o$ lies outside the area bounded by an arc and its shadow. 

Then we do the same for the pairs $(i_2,i_3), \ldots, (i_{a-1},i_a), (i_a,i_1)$. 
If $a=1$, then we have just one pair $(i_1,i_1)$. 
Different outside arcs are disjoint.
The same procedure applies to $I'$, but the outside arcs are replaced here with {\bf inside} arcs. 
The rule concerning $o$ remains the same. Figure~\ref{rules} explains this. 
Let 
$\Gamma=\Gamma(S,S')$ be the resulting graph. 

Without loss of generality, we assume that the arcs of $\Gamma$ meet only 
in vertices. The nodes of $\Gamma$ can be labeled by representatives of  the cosets $a+n\Z$.

\begin{figure}
\begin{tabular}{c|c|c|c}
\hline
correct picture & wrong picture &  correct picture  &   wrong picture  \\
for the outside arcs &  for the outside arcs & for the inside arcs  & for the inside arcs \\
in case $|I|=1$ & in case $|I|=1$ & in case $I'=\{j\}$ & in case $I'=\{j\}$ \\ 
\hline
\trimbox{3.3cm 1.5cm 3cm 0cm}{
\begin{tikzpicture}[scale=0.53]   
 \draw[color=blue, thin](0,0) circle (3);
  \filldraw [gray] (0:3) circle (3pt)
       (45:3) circle (3pt) (2*45:3) circle (3pt) (3*45:3) circle (3pt) (4*45:3) circle (3pt)  (5*45:3) circle (3pt)
        (6*45:3) circle (3pt)  (7*45:3) circle (3pt);
          \draw[thick] (3*45:3)  .. controls (-5.5,0) and (-1.7,-5.3) ..  (0.8,-4.7);
             \draw[thick] (0.8,-4.7)  .. controls (4.7,-4) and (5.8,3) ..  (2*45:3);
            \draw[thick]  (4*45:3)  .. controls (-2,-8) and (7,-2.5) ..  (45:3);  
                \draw[thick] (5*45:3)  .. controls (-1.5,-4.5) and (4,-4) ..  (0:3);   
                \draw[thick] (6*45:3)  .. controls (1.5,-3.5) and (2,-2.5) ..  (7*45:3); 
                   \node (v0) at (0:0) {$\bullet$};
 \end{tikzpicture}
}
 & \trimbox{0cm 1.5cm 1.7cm -0.2cm}{ \begin{tikzpicture}[scale=0.53]   
 \draw[color=blue, thin](0,0) circle (3);
  \filldraw [gray] (0:3) circle (3pt)
       (45:3) circle (3pt) (2*45:3) circle (3pt) (3*45:3) circle (3pt) (4*45:3) circle (3pt)  (5*45:3) circle (3pt)
        (6*45:3) circle (3pt)  (7*45:3) circle (3pt);
          \draw[color=red, thick] (3*45:3)  .. controls (-2,3) and (-1,3.7) ..  (2*45:3);
            \draw[thick]  (4*45:3)  .. controls (-2,-8) and (7,-2.5) ..  (45:3);  
                \draw[thick] (5*45:3)  .. controls (-1.5,-4.5) and (4,-4) ..  (0:3);   
                \draw[thick] (6*45:3)  .. controls (1.5,-3.5) and (2,-2.5) ..  (7*45:3); 
                   \node (v0) at (0:0) {$\bullet$};
                   \node(v1) at (0.7,3.5) {$i_1{+}1$};
 \end{tikzpicture}}  & 
 \trimbox{1.8cm 0cm 3cm 2cm}{
\begin{tikzpicture}[scale=0.53]   
  \node(v1) at (0.2,3.5) {$j{+}1$};
    \node(v2) at (-2.4,2.5) {$j$};
 \draw[color=blue, thin](0,0) circle (3);
  \filldraw [gray] (0:3) circle (3pt)
       (45:3) circle (3pt) (2*45:3) circle (3pt) (3*45:3) circle (3pt) (4*45:3) circle (3pt)  (5*45:3) circle (3pt)
        (6*45:3) circle (3pt)  (7*45:3) circle (3pt);
          \draw[thick] (3*45:3)  ..  controls (-0.2,-3) and (2.2,-1) ..  (2*45:3);
            \draw[thick]  (4*45:3)  .. controls (-2,-4) and (4,-1) ..  (45:3);  
                \draw[thick] (5*45:3)  .. controls (-1,-3) and (2,-2.5) ..  (0:3);   
                \draw[thick] (6*45:3) -- (7*45:3); 
                   \node (v0) at (0:0) {$\bullet$};
 \end{tikzpicture}} 
 & 
  \trimbox{1.8cm 0cm 3cm 2cm}{
\begin{tikzpicture}[scale=0.53]   
  \node(v1) at (0.2,3.5) {$j{+}1$};
    \node(v2) at (-2.4,2.5) {$j$};
 \draw[color=blue, thin](0,0) circle (3);
  \filldraw [gray] (0:3) circle (3pt)
       (45:3) circle (3pt) (2*45:3) circle (3pt) (3*45:3) circle (3pt) (4*45:3) circle (3pt)  (5*45:3) circle (3pt)
        (6*45:3) circle (3pt)  (7*45:3) circle (3pt);
          \draw[color=red, thick] (3*45:3)  ..  controls (0,0) and (0.5,1) ..  (2*45:3);
            \draw[thick]  (4*45:3)  .. controls (-2,-4) and (4,-1) ..  (45:3);  
                \draw[thick] (5*45:3)  .. controls (-1,-3) and (2,-2.5) ..  (0:3);   
                \draw[thick] (6*45:3) -- (7*45:3); 
                   \node (v0) at (0:0) {$\bullet$};
 \end{tikzpicture}} 
\end{tabular}
\caption{Rules concerning $o$.} \label{rules}
\end{figure}

In Figure~\ref{ex3}, we present three graphs $\Gamma$. For the first one, $n=10$, $I=\{9\}$,  
and $I'=\{4,8\}$. For the second one,
$n=9$, $I=\{3,8\}$, and $I'=\{2,6\}$. Finally for the third one, $n=8$, $I=\{1,5\}$, $I'=\{3,7\}$. 

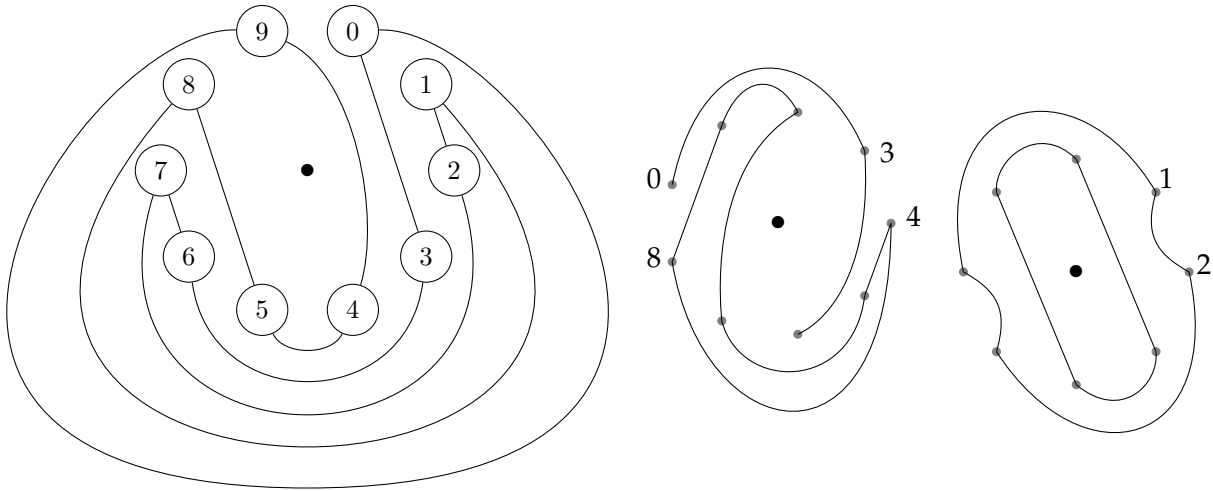
\begin{figure}
  \trimbox{3.5cm 1cm 3cm -0.5cm}{
 \begin{tikzpicture}[scale=0.65]                   
    \node (v0) at (0:0) {$\bullet$};
  \tikzstyle{every node}=[draw,shape=circle];
  \node (v1) at ( 0:3) {{\footnotesize $2$}} ;
  \node (v2) at ( 36:3) {{\footnotesize $1$}} ;
  \node (v3) at (2*36:3) {{\footnotesize $0$}} ;
  \node (v4) at (3*36:3) {{\footnotesize $9$}} ;
  \node (v5) at (4*36:3) {{\footnotesize $8$}} ;
  \node (v7) at (5*36:3) {{\footnotesize $7$}} ;
    \node (v8) at (6*36:3) {{\footnotesize $6$}} ;
      \node (v9) at (7*36:3) {{\footnotesize $5$}} ;
        \node (v10) at (8*36:3) {{\footnotesize $4$}} ;
          \node (v6) at (9*36:3) {{\footnotesize $3$}} ;
          \draw (v1) -- (v2) ;
           \draw (v4) .. controls   (1,2)  and (1.5,-1)  ..  (v10) ;
             \draw (v5) -- (v9) ; 	
               \draw (v3) -- (v6) ; 
                 \draw (v8) -- (v7) ; 
       \draw (v4) .. controls   (-4.5,3)  and (-11,-6.5)  ..  (0,-6.5) ;     
            \draw (0,-6.5) .. controls   (11,-6.5)  and (4.5,3)  ..  (v3) ;            
           \draw (v5) .. controls (-11,-8) and (11,-8) ..  (v2) ;       
         \draw (v9) .. controls (-0.5,-3.8) and (0.5,-3.8) ..  (v10) ;            
       \draw (v8) .. controls (-2,-5) and (2,-5) ..  (v6) ;           
       \draw (v7) .. controls (-5,-6.5) and (5,-6.5) ..  (v1) ;            
 \end{tikzpicture}}
  \trimbox{0cm 0cm 0.5cm 0cm}{
\begin{tikzpicture}[scale=0.5]   
  \filldraw [gray] (0:3) circle (3pt)
       (40:3) circle (3pt) (2*40:3) circle (3pt) (3*40:3) circle (3pt) (4*40:3) circle (3pt)  (5*40:3) circle (3pt)
        (6*40:3) circle (3pt)  (7*40:3) circle (3pt)  (8*40:3) circle (3pt);
        \draw  (8*40:3) -- (0:3) ; 
          \draw  (7*40:3)  .. controls (1.5,-2.5) and (2.5,-1) ..  (40:3) ; 
           \draw  (6*40:3) .. controls (-1.5,-2.5) and (-2,1.5).. (2*40:3) ; 
            \draw  (5*40:3) -- (3*40:3) ; 
          \draw (3*40:3)  .. controls (-1,4) and (0,4) ..  (2*40:3);
            \draw (4*40:3)  .. controls (-2,5) and (1,5) ..  (40:3);  
                \draw (5*40:3)  .. controls (-2,-6) and (3,-7) ..  (0:3);  
                \draw (6*40:3)  .. controls (-1,-4.5) and (2,-4.5) ..  (8*40:3);  
              \node (v0) at (0:0) {$\bullet$};    
              \node (v1) at (-3.3,1.2) {{\small 0}};
                 \node (v3) at (2.9,1.9) {{\small 3}};
                  \node (v4) at (3.6,0.2) {{\small 4}};
                     \node (v8) at (-3.3,-0.9) {{\small 8}};
 \end{tikzpicture} }  \ \ 
 \begin{tikzpicture}[scale=0.5]   
  \filldraw [gray] (0:3) circle (3pt)
       (45:3) circle (3pt) (2*45:3) circle (3pt) (3*45:3) circle (3pt) (4*45:3) circle (3pt)  (5*45:3) circle (3pt)
        (6*45:3) circle (3pt)  (7*45:3) circle (3pt);
          \draw  (45:3) .. controls (1.8,1) and (2.1,0.5) ..   (0:3) ; 
           \draw  (2*45:3) -- (7*45:3) ; 
            \draw  (3*45:3) -- (6*45:3) ; 
                 \draw  (4*45:3) .. controls (-2.1,-0.5)  and   (-1.8,-1) ..  (5*45:3) ; 
          \draw (3*45:3)  .. controls (-2.2,3) and (-1,4) ..  (2*45:3);
            \draw (4*45:3)  .. controls (-4,4.7) and (0,5.7) ..  (45:3);  
                \draw (5*45:3)  .. controls (0.2,-5.7) and (4,-4.7) ..  (0:3);   
                 \draw (6*45:3)  .. controls (1.2,-4) and (2.2,-3) ..  (7*45:3); 
                   \node (v0) at (0:0) {$\bullet$};
          \node (v1) at (2.4,2.5)       {{\small 1}};   
              \node (v2) at (3.4,0.2)       {{\small 2}};   
 \end{tikzpicture} 
\caption{Examples of graphs $\Gamma$.} \label{ex3}
\end{figure}

Each node of $\Gamma$ is a vertex of at most $2$ arcs.
Therefore a connected component of  $\Gamma$ is either a segment or a cycle. 
There are no loops in $\Gamma$. 
But note that $\Gamma$ does not have to be a simple graph. It may contain a cycle 
with two vertices $i,j$ and two arcs $(i,j)_{\sf outs}$, $(i,j)_{\sf ins}$, where the first one is an outside and the second an inside arc. In that case, ``an arc $(i,j)$ of $\Gamma$" refers to any of the two.

\section{Combinatorial interpretation of the index in type $\widetilde{{\sf A}}_r$} \label{d}

In this section, $\gt g=\gt{sl}_{n}$.  
We choose $\{1,\ldots,n\} \subset\Z$ as a set of representatives for $\Z/n\Z$. 
Now the vertices of $\Gamma$ are numbered from $1$ to $n$. 
To the arc of the circle joining a vertex $i$ with $i{+}1$ clockwise we assign the simple root $\ap_i$.
The arc joining $n$ and $1$ is labeled with $\ap_0$.  

Recall that $\esi_j\in\gt{gl}_n^*$ with $1\le j\le n$ are the standard linear functions.  
We regard them also as functions on $\te^*$ and on $\gt h$ by  
setting $\esi_i(d)=\esi_i(C)=0$ for each $i$. 
Of course, $\sum_{i=1}^n \esi_i$ is zero on the Cartan subalgebra $\gt t\subset\gt{sl}_n$. 
This circumstance makes several arguments more involved. 

Set $\delta=\sum_{i=0}^r \ap_i \in\gt h^*$,  where $r=n-1$.  Then $\delta$ is an imaginary root such that  $\delta|_{\gt t}=0$, 
$\delta(d)=1$, and $\delta(C)=0$, while $\ap_0=\esi_n-\esi_1+\delta$ and $\ap_i=\esi_{i}-\esi_{i+1}$ for $i>0$.  In this terms, the label of an arc $(i,i+1)$ is 
$\esi_{i}-\esi_{i+1}$ for $1\le i\le r$.

There is a bijections between the outside arcs of $\Gamma$ and $\cK(S)$. Namely, an arc $(i,j)$ going from a vertex $i$ to 
$j$ corresponds to 
$$
\sum_{u\in[i,j-1]}\alpha_u, \
\text{ where } 
\ [i,j-1]=\{i,i+1,\ldots,r,0,1,\ldots,j-1\} \ \text{ if } j<i. 
$$
The sum is equal to $\esi_i-\esi_j$ if $j>i$ and to $\esi_i-\esi_j+\delta$, if $j<i$, cf.~\eqref{eq:K-Pi-gl}. 
The similar bijection exists between $\cK(S')$ and the inside arcs.
Uniformly, we assign to an arc of $\Gamma$ the sum of simple roots belonging to its shadow. 

Let $(i,j)$ be an arc of $\Gamma$, which may go from $i$ to $j$ or from $j$ to $i$. Let 
$\beta(i,j)\in \cK(S)\cup \cK(S')$ be the element assigned to it. Then 
\[
\beta(i,j)\in\{\esi_i-\esi_j, \esi_j-\esi_i,  \esi_i-\esi_j+\delta, \esi_j-\esi_i+\delta \}.
\]

\begin{lm} \label{ind}
Let $B\subset \Gamma$ be a connected subgraph consisting of the  pairwise distinct vertices 
$j_1,\ldots,j_m$ and the edges $(j_i,j_{i+1})$ with $1\le i<m$. Then 
the set 
$$
\bar K(B):=\{ \beta(j_i,j_{i+1})|_{\gt t}  \mid 1\le i<m\} \subset\gt t^*
$$
is linearly independent and $|\bar K(B)|=m-1$.
\end{lm}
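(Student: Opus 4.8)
The plan is to remove the imaginary root $\delta$ by restriction and then reduce the statement to the elementary fact that the edge vectors of a path in the $\esi$-lattice of $\gln$ are linearly independent. First, since $\delta|_{\gt t}=0$ and $\beta(j_i,j_{i+1})\in\{\esi_{j_i}-\esi_{j_{i+1}},\,\esi_{j_{i+1}}-\esi_{j_i},\,\esi_{j_i}-\esi_{j_{i+1}}+\delta,\,\esi_{j_{i+1}}-\esi_{j_i}+\delta\}$, each restricted label equals $\pm(\esi_{j_i}-\esi_{j_{i+1}})|_{\gt t}$. As signs do not affect linear independence, it suffices to prove that the $m-1$ vectors $v_i:=(\esi_{j_i}-\esi_{j_{i+1}})|_{\gt t}$, $1\le i<m$, are linearly independent in $\gt t^*$. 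This already yields $|\bar K(B)|=m-1$, since a linearly independent family consists of pairwise distinct nonzero vectors.

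Second, I would pass to the free span $E:=\langle\esi_1,\dots,\esi_n\rangle_{\bbk}$, on which the $\esi_i$ form a basis. The restriction $E\to\gt t^*$ is surjective with kernel $\bbk\sum_{k=1}^n\esi_k$, because $\sum a_i\esi_i$ vanishes on the trace-zero diagonal $\gt t$ exactly when all the $a_i$ coincide. Each difference $\esi_{j_i}-\esi_{j_{i+1}}$ lies in the zero-sum hyperplane $W=\{\sum_k a_k\esi_k\mid\sum_k a_k=0\}$, and $W\cap\bbk\sum_k\esi_k=0$ since $\sum_k\esi_k$ has coefficient-sum $n\ne 0$. Hence the restriction map is injective on $W$, and linear independence of the $v_i$ in $\gt t^*$ is equivalent to linear independence of the lifts $\esi_{j_i}-\esi_{j_{i+1}}$ in $E$.

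Finally, the independence in the free module $E$ follows by a triangular (telescoping) argument: if $\sum_{i=1}^{m-1}c_i(\esi_{j_i}-\esi_{j_{i+1}})=0$, then the coefficient of $\esi_{j_1}$, which occurs only in the first summand, forces $c_1=0$; reading off the coefficient of $\esi_{j_k}$ for $k=2,\dots,m$ in turn gives $c_k-c_{k-1}=0$, so all $c_i$ vanish. The hypothesis that $j_1,\dots,j_m$ are pairwise distinct is exactly what makes these coefficients triangular. The only genuine subtlety of the lemma is the single relation $\sum_i\esi_i=0$ on $\gt t$ (flagged in the surrounding text): it is disposed of once and for all by restricting to $W$, on which $E\to\gt t^*$ is injective. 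This is most delicate in the tight case $m=n$, where the $m-1$ differences are forced to be a \emph{basis} of the $(n-1)$-dimensional space $\gt t^*$ and a naive dimension count would leave the relation unaccounted for; the reduction through $W$ is precisely what handles it.
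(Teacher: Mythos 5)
Your proof is correct and follows essentially the same route as the paper's: restriction to $\gt t$ kills $\delta$, the trace relation $\sum_i\esi_i|_{\gt t}=0$ is neutralised by observing that the differences $\esi_{j_i}-\esi_{j_{i+1}}$ are zero-sum (the paper phrases this as vanishing on the centre $\bbk I_n\subset\gt{gl}_n$, you as injectivity of the restriction map on the hyperplane $W$ --- the same fact), and then a telescoping coefficient argument along the path, using that the vertices are pairwise distinct, finishes the proof. Your write-up merely packages the key step as an explicit injectivity statement rather than applying it directly to a vanishing linear combination, which is a presentational, not mathematical, difference.
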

\begin{proof}
Suppose that $X=\sum_{i=1}^{m-1} c_i \bar\beta(j_i,j_{i+1})=0$ for some $c_i\in\bbk$. 
We have 
\[
X=\pm c_1 \esi_{j_1}+ \left(\sum_{i=2}^{m-1} (\pm c_{i-1}\pm c_i) \esi_{j_i}\right) \pm c_{m-1} \esi_{j_m}.
\]
Each difference $\esi_i-\esi_j$ is zero on the centre 
$\bbk I_n\subset\gt{gl}_n$. Hence 
$X$, regarded an an element of $\gt{gl}_n^*$, is zero on $\bbk I_n$. Thereby $X$ is zero on $\gt{gl}_n$ and $c_1=c_{m-1}=0$. Furthermore,
$c_{i-1}=\pm c_{i}$ for $2\le i \le m-1$. Thus, $c_i=0$ for each $i$. 
\end{proof}

For a connected component $B$ of  $\Gamma$,
let $\cv(B)$ be the set of vertices of $B$ and $\cE(B)$ the set of edges. 
Set $K(B)=\{ \beta(i,j) \mid (i,i)\in \cE(B)\}\subset\gt h^*$ and 
$$
\bar K(B)= \{ \beta(i,j)|_{\gt t} \mid (i,i)\in \cE(B)\}\subset\gt t^*.
$$  

\begin{lm} \label{round}
Let $B$ be a cycle in $\Gamma$ that does not contain $o$ in its interior. 
Let $j_1,\ldots,j_m$ be the consecutive vertices of $B$.
For each $\e\in\cE(B)$ joining a vertex $j_i$ with $j_{i+1}$ or $j_m$ with $j_1$,
let $c_{\e}\in\{1,-1\}$ be a coefficient defined by the following rule. If the shadow 
of $\e$ goes from $j_i$ to $j_{i+1}$ (or from  $j_m$ to $j_1$) clockwise, then $c_{\e}=1$, 
otherwise $c_{\e}=-1$. Then 
$Y(B):=\sum\limits_{\e\in\cE(B)} c_{\e} \beta(\e)=0$.
\end{lm}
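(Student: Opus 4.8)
The plan is to rewrite $Y(B)$ in the $\esi$--$\delta$ coordinates, reduce the asserted vanishing to the vanishing of a single integer (the coefficient of $\delta$), and then recognise that integer as a winding number about $o$ that is pinned down by the hypothesis on the position of $o$ relative to $B$.

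First I would put every summand into a common form. Write $j_{m+1}=j_1$. If $c_\e=1$ the shadow of the edge $\e=(j_i,j_{i+1})$ runs clockwise from $j_i$ to $j_{i+1}$, so $\beta(\e)=\esi_{j_i}-\esi_{j_{i+1}}+k\delta$ with $k=1$ exactly when this shadow contains the $\ap_0$--arc and $k=0$ otherwise. If $c_\e=-1$ the shadow runs clockwise from $j_{i+1}$ to $j_i$, and multiplying $\beta(\e)=\esi_{j_{i+1}}-\esi_{j_i}+k'\delta$ by $-1$ gives the same shape. Hence in all cases
\[
c_\e\beta(\e)=\esi_{j_i}-\esi_{j_{i+1}}+m_i\delta,\qquad m_i\in\{-1,0,1\},
\]
where $m_i=+1$ (resp.\ $-1$) precisely when $\e$ is a forward (resp.\ backward) edge whose shadow contains $\ap_0$, and $m_i=0$ otherwise. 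Summing over the cycle, the $\esi$-part telescopes, $\sum_{i=1}^m(\esi_{j_i}-\esi_{j_{i+1}})=0$, so $Y(B)=\bigl(\sum_{i=1}^m m_i\bigr)\delta$ and everything reduces to proving $\sum_{i=1}^m m_i=0$.

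Next I would identify $\sum_i m_i$ with a winding number. Traverse the circle in the cycle direction, moving clockwise from $j_i$ to $j_{i+1}$ along a forward edge and counterclockwise along a backward edge; since $B$ is a cycle this yields a closed walk $\Sigma$ on the circle. By construction $m_i$ records a signed crossing of the $\ap_0$--arc (the gap between the vertices $n$ and $1$): $+1$ for a clockwise crossing, $-1$ for a counterclockwise one, $0$ for none. Thus $\sum_i m_i$ is the net number of signed crossings of a fixed ray from $o$ through the $\ap_0$--arc, i.e.\ the winding number of $\Sigma$ about $o$.

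Finally I would compare $\Sigma$ with $B$ itself, and this is where the geometric rule of the construction is essential: each arc $\e$ and its shadow bound a region of the plane that does not contain $o$. As an inside or outside arc meets the circle only at its two endpoints, $\e$ together with its shadow forms a Jordan curve, and the region avoiding $o$ provides a homotopy, rel endpoints and within $\BR^2\setminus\{o\}$, from the oriented arc $\e$ (traversed $j_i\to j_{i+1}$) to the corresponding oriented shadow path. Concatenating these over the edges of $B$ gives a free homotopy from $B$ to $\Sigma$ in $\BR^2\setminus\{o\}$, so the two loops have equal winding number about $o$. Since the arcs of $\Gamma$ meet only in vertices, $B$ is a simple closed curve, and the hypothesis that $o$ lies outside its interior forces its winding number, hence that of $\Sigma$, to be $0$; therefore $\sum_i m_i=0$ and $Y(B)=0$. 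The hard part is precisely this last step — making rigorous that $\Sigma$ and $B$ are homotopic in the punctured plane — since it is exactly here that the (seemingly cosmetic) constraint on the position of $o$ relative to each arc and its shadow is used; without it the $\delta$-components need not cancel, and indeed a cycle enclosing $o$ would give $Y(B)=\pm\delta\neq 0$.
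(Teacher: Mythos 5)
Your proof is correct, and it shares its geometric core with the paper's proof while organising the algebra quite differently. The paper argues via a radial projection: as a point $g$ traverses $B$ once, its projection $\ell(g)$ from $o$ onto the circle traverses the corresponding shadows (this is exactly where the drawing rule on $o$ enters), and since $o$ is not enclosed by $B$ the total rotation angle is zero, so each arc of the circle is crossed clockwise as often as anticlockwise; hence \emph{every} $\ap_j\in\widehat{\Pi}$, not just $\ap_0$, has zero coefficient in $Y(B)$, with no coordinate computation at all. You instead work in $\esi$--$\delta$ coordinates: the $\esi$-part telescopes for purely combinatorial reasons, valid for any cycle no matter where $o$ sits, so the geometry is concentrated in a single integer, the coefficient of $\delta$, which you identify with a winding number and then kill by a rel-endpoints homotopy from each arc to its shadow (the rule on $o$ again) together with the fact that a Jordan curve has winding number zero about points of its exterior. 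Both arguments thus rest on the same two facts --- the rule on $o$ forces each arc to be homotopic to its shadow in $\BR^2\setminus\{o\}$, and a cycle not enclosing $o$ has zero winding number --- so the difference is the bookkeeping: your reduction is more modular and makes transparent that the hypothesis on $o$ is needed only for the $\delta$-component, whereas the paper's projection argument yields the finer conclusion that each simple root separately has zero coefficient. It is also worth noting that your crossing-count treatment of the $\delta$-coefficient is essentially the ``positive and negative crossings'' argument that the paper itself sketches later, inside the proof of Theorem~\ref{thm-A}, as an alternative route for cycles with $o\notin F(B)$; and your closing remark that a cycle enclosing $o$ would give $\pm\delta$ matches the paper's claim there that $X(B)=k\delta$ with $k$ odd.
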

\begin{proof}
Consider an arrow $\overrightarrow{og}$ starting at $o$ and ending at a point $g$ of $B$. Let $g$ move along $B$ making one round trip. The arrow is moving accordingly, rotating around $o$. Since $o$ does not lie in the interior of $B$, the 
total angle of the rotation is zero. We extend $\overrightarrow{og}$ to a half-line $L_{\overrightarrow{og}}$ and let
$\ell(g)$ be the intersection point of $L_{\overrightarrow{og}}$ and the circle. Then the total distance, counted with a sign,
traveled by $\ell(g)$ on the circle is zero.   The equality $Y(B)=0$ is an analogue of this fact. 
 
Assume that $g=j_1$ at first and that it goes to $j_2$, $j_3$ and so on until $j_m$ and then back to $j_1$.  When 
$g$ moves from $u$ to $v$ along an edge $(u,v)\in \cE(B)$, the point $\ell(g)$ moves from $u$ to $v$ along the shadow of $(u,v)$.  For each arc $(\overline{i},\overline{i+1})$ of the circle, $\ell(g)$ travels from  
$\overline{i}$ to $\overline{i+1}$ clockwise as many times as from $\overline{i+1}$ to $\overline{i}$ anti-clockwise, because  
the total rotation angle is zero. Hence each $\ap_j\in\widehat{\Pi}$  appears in  $Y(B)$ with the zero coefficient and 
$Y(B)=0$.
\end{proof}

\begin{thm} \label{thm-A}
Let $\wq=\wq(S,S')$ be a  standard finite-dimensional seaweed subalgebra of $\wg$ for $\gt g=\gt{sl}_n$.
Set $\bar\q=\wq/\bbk C$. 
 Then 
$$
\ind\bar\q= 2\#\{\text{the cycles of}\ \Gamma\} + \#\{\text{the segments of $\Gamma$}\}-\iota$$ 
for $\Gamma=\Gamma(S,S')$, where $\iota=0$ if $\Gamma$ has no cycles with $o$ in the interior and $\iota=2$ otherwise. 
\end{thm}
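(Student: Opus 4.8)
The plan is to feed the combinatorics of $\Gamma=\Gamma(S,S')$ directly into the TYJ formula \eqref{eq:tau-yu}. Here $|\widehat\Pi|=n$, and by the two bijections set up before the statement (outside arcs $\leftrightarrow\cK(S)$, inside arcs $\leftrightarrow\cK(S')$) together with $\dim E_S=\#\cK(S)$ one has $\dim E_S+\dim E_{S'}=\#\{\text{arcs of }\Gamma\}$. Counting arcs component by component — a segment on $m$ vertices carries $m-1$ arcs, a cycle on $m$ vertices carries $m$ arcs — and using that $\Gamma$ has $n$ vertices in total, I get that the number of arcs equals $n-s$, where $s$ is the number of segments and $c$ the number of cycles. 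Thus \eqref{eq:tau-yu} reads $\ind\bar\q=n+(n-s)-2\dim(E_S+E_{S'})$, and everything reduces to computing $\dim(E_S+E_{S'})$.

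To do this I would split $\hat{\te}^*_{\BR}=\gt t^*\oplus\BR\delta$ and study $E_S+E_{S'}$ through the projection $\pi$ onto $\gt t^*$ along $\BR\delta$. Since the shadow of an arc $\e$ running from $u$ to $v$ telescopes the $\esi$-differences (the arc $\ap_0$ contributing $\esi_n-\esi_1$), one has $\beta(\e)|_{\gt t}=\esi_u-\esi_v$, so $\pi(E_S+E_{S'})$ is the span of the edge-differences $\{\esi_u-\esi_v\mid (u,v)\in\cE(\Gamma)\}$. Choosing a spanning tree in each component (the whole segment, or a cycle minus one arc) and invoking Lemma~\ref{ind} for independence within a component — the supports of distinct components being disjoint — I obtain $\dim\pi(E_S+E_{S'})=n-(s+c)$, the standard incidence rank $n-\#\{\text{components of }\Gamma\}$. (One checks that $\sum_{i=1}^n\esi_i$ does not lie in the edge-difference span, so passing to $\gt t^*$ does not lower the dimension.)

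It remains to determine $\ker\pi\cap(E_S+E_{S'})=(E_S+E_{S'})\cap\BR\delta$, which is either $0$ or $\BR\delta$. The space of linear relations among the restrictions $\beta(\e)|_{\gt t}$ has dimension $(n-s)-(n-s-c)=c$; each cycle $B$ furnishes such a relation, because $\sum_{\e\in\cE(B)}c_\e\beta(\e)|_{\gt t}=0$ by telescoping the $\esi$-differences around $B$, and the cycles have disjoint arc-sets, so these $c$ cycle-vectors span the whole relation space. Evaluating the corresponding \emph{un-restricted} combination on a cycle produces exactly $Y(B)=\sum_{\e\in\cE(B)}c_\e\beta(\e)$, so $(E_S+E_{S'})\cap\BR\delta$ is spanned by the $Y(B)$. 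By Lemma~\ref{round}, $Y(B)=0$ whenever $o$ is not in the interior of $B$. The new ingredient I must supply is the complementary case: when $o$ lies inside $B$, the arrow $\overrightarrow{og}$ makes one full turn as $g$ traverses $B$, so $\ell(g)$ sweeps every circle-arc once more clockwise than anti-clockwise; hence every $\ap_j\in\widehat\Pi$ occurs in $Y(B)$ with coefficient $\pm 1$ and $Y(B)=\pm\sum_{j=0}^r\ap_j=\pm\delta\neq 0$. Consequently $\delta\in E_S+E_{S'}$ exactly when some cycle encloses $o$, i.e. $\dim\bigl((E_S+E_{S'})\cap\BR\delta\bigr)=\epsilon$, with $\epsilon=1$ in that case and $\epsilon=0$ otherwise.

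Combining the two pieces, $\dim(E_S+E_{S'})=(n-s-c)+\epsilon$, and substitution into \eqref{eq:tau-yu} yields $\ind\bar\q=n+(n-s)-2(n-s-c+\epsilon)=2c+s-2\epsilon$. Since $2\epsilon$ equals $0$ when no cycle encloses $o$ and $2$ otherwise, this is precisely $2\#\{\text{cycles}\}+\#\{\text{segments}\}-\iota$, as claimed. The hard part will be the winding-number computation for $o\in\operatorname{int}(B)$ — the converse to Lemma~\ref{round} — which is exactly what separates $Y(B)=\pm\delta$ from $0$ and thereby produces the correction term $\iota$; the remaining arguments are the bookkeeping of arc counts and the incidence rank feeding into TYJ.
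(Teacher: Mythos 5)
Your proof is correct, and its skeleton is the paper's: plug the graph data into the TYJ formula \eqref{eq:tau-yu}, identify $\dim E_S+\dim E_{S'}$ with the number of arcs $n-s$, show the restriction to $\gt t$ has dimension $(n-s)-c$, and then decide whether $\delta\in E_S+E_{S'}$ cycle by cycle. Two steps are executed differently. First, where the paper argues that any minimal relation among the restricted cascade elements is supported on a single connected component (Lemma~\ref{ind} plus disjointness of vertex sets), you compute the incidence rank via spanning trees and, more usefully, you verify by a dimension count that the $c$ cycle relations (independent because of disjoint arc sets) exhaust the whole relation space; this makes explicit the step the paper treats briefly when it asserts that $\delta\in E_S+E_{S'}$ forces $\delta\in\langle K(B)\rangle_{\BR}$ for some cycle $B$, and it cleanly identifies $(E_S+E_{S'})\cap\BR\delta$ as the span of the elements $Y(B)$. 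Second, the genuine divergence: for a cycle $B$ with $o\in F(B)$ the paper does not use the rotation argument at all; it takes a generic half-line $C$ from $o$ through the arc $(n,1)$, notes that $C$ crosses $B$ an odd number of times, and concludes that the coefficient of $\delta$ in $X(B)$ is odd, hence nonzero. You instead extend the winding-number argument of Lemma~\ref{round} to the interior case: since $B$ is a simple closed curve (the no-self-intersection convention for $\Gamma$ guarantees this), its winding number about an interior point is $\pm1$, so each $\ap_j$ enters $Y(B)$ with that same coefficient and $Y(B)=\pm\delta$. Your version yields a stronger conclusion (the paper only pins down an odd multiple of $\delta$) and handles both halves of the dichotomy uniformly, the coefficient of each $\ap_j$ being the winding number of $B$ about $o$; the paper's parity argument, on the other hand, avoids any appeal to the Jordan-curve fact. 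Both routes are sound, and the bookkeeping that converts $\dim(E_S+E_{S'})=(n-s-c)+\tfrac{1}{2}\iota$ into the stated formula is identical.
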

\begin{proof}
We use~\eqref{eq:tau-yu}. 
The relation between the arcs of $\Gamma$ and the elements of $\cK(S)$ and $\cK(S')$ shows that 
$\dim E_S +\dim E_{S'}$ is equal to the number of arcs in $\Gamma$, say $a$.  Since each node of $\Gamma$ is a vertex of at most $2$ arcs, we have $a=n-\#\{\text{the segments of $\Gamma$}\}$. 

In order to compute $\dim(E_S+E_{S'})$, we 
describe relations among the elements of $\cK(S,S'):=\cK(S)\cup-\cK(S')$.
The minus sign in this formula is used because $\cK(S)\cap\cK(S')$ does not have to be empty. 

For each connected component $B$ of $\Gamma$, we have 
$\bar K(B)\subset \langle \esi_i \mid i \in \cv(B)\rangle_{\BR}$. 
If $B_1$ and $B_2$ are distinct components, then $\cv(B_1)\cap\cv(B_2)=\varnothing$. 
Since each difference $\esi_i-\esi_j$ is zero on $\bbk I_n$, a minimal relation among elements 
of $\cK(S,S')$ is (up to signs) a non-trivial linear dependence among 
$\bar\beta(\e)=\beta(\e)|_{\gt t}$ with $\e\in \cE(B)$ for some
$B$. By Lemma~\ref{ind}, there are no such dependences if
 $B$ is a segment and  at most one  if $B$ is a cycle. 

Let $B$ be a cycle with the  pairwise distinct vertices 
$j_1,\ldots,j_m$ and the edges $(j_i,j_{i+1})$ with $1\le i\le m$, where 
$(j_m,j_{m+1})$ is an arc connecting $j_m$ with $j_1$. Then 
\begin{equation} \label{B}
\sum\limits_{(u,v)\in\cE(B)} c_{(u,v)} \bar\beta(u,v) =0
\end{equation} 
for  
$c_{(u,v)}\in \{1,-1\}$. Thus,
\[
\dim((E_S+E_{S'})|_{\gt t}) = \dim E_S + \dim E_{S'} - \#\{\text{the cycles of}\ \Gamma\}. 
\]
  
It remains to compare $\dim(E_S+E_{S'})$ and 
$\dim((E_S+E_{S'})|_{\gt t})$. These dimensions are equal if $\delta\not\in E_S+E_{S'}$ and differ by $1$ otherwise. 
For each cycle $B$ of $\Gamma$, set $X(B)=\sum\limits_{\e\in\cE(B)} c_{\e} \beta(\e)$, where
the coefficients $c_{\e}$ are the same as in~\eqref{B}. Then $X(B)\in \BR\delta$. 
Let $F(B)$ be  the face of $\Gamma$ bounded by $B$.
Now we show that $X(B)\ne 0$ if and only if  $o\in F(B)$. 

Let $C$ be the half-line starting at $o$ and going through the middle of the circle arc $(n,1)$. 
Without loss of generality, assume that $C\cap\Gamma$ is a finite set and that $C$ intersects each arc of $\Gamma$ at most once and transversally. 
Then $C$ intersects the shadow of an arc $(i,j)$ of $\Gamma$ if and only if it intersects $(i,j)$. 
Note that $\beta(i,j)=\delta\pm(\esi_i -\esi_j)$ if and only if $C$ intersects the shadow of $(i,j)$.    

Suppose first that $o\in F(B)$. Since $C$ intersects $B$ odd number of times, the number of $(u,v)\in\cE(B)$ such that 
$C\cap (u,v)\ne\varnothing$  is odd. Hence $X(B) = k \delta$ for $k\in 1+2\Z$.

Suppose now that  
$o\not\in F(B)$. 
%
Then 
Lemma~\ref{round} implies that 
$X(B)=0$. 
This can be seen in another way, if we define {\it negative} and {\it positive} crossings of $C$ and $B$.
Roughly speaking, a  crossing is positive, if $C$ enters $F(B)$ and negative otherwise. 
Depending on this,  $\beta(i,j)=\delta\pm(\esi_i -\esi_j)$ appears in $X(B)$ either with coefficient $1$ or $-1$.
Since the numbers of negative and positive crossings coincide, the coefficient of $\delta$ in $X(B)$ is zero. 

Recall that  a minimal relation among the elements 
of $\cK(S,S')|_{\gt t}$ is of the form~\eqref{B} 
for some cycle $B$. Thus,
$\delta\in E_S+E_{S'}$ if and only if $\delta\in \langle K(B)\rangle_{\BR}$ for a cycle $B$. 
Now we have $\dim(E_S+E_{S'})=\dim((E_S+E_{S'})|_{\gt t}) + \frac{1}{2}\iota$. Rewriting~\eqref{eq:tau-yu} brings 
\begin{align*}
& \ind\bar\q=n+\dim E_S +\dim E_{S'}-2\left(\dim((E_S+E_{S'})|_{\gt t}) + \frac{1}{2}\iota\right)= \\
 & =
  n+a-2\left(\dim E_S +\dim E_{S'} - \#\{\text{the cycles of}\ \Gamma\}+ \frac{1}{2}\iota\right)= \\
& =   n - a + 2\#\{\text{the cycles of}\ \Gamma\}  -\iota = \#\{\text{the segments of $\Gamma$}\} + 2\#\{\text{the cycles of}\ \Gamma\}  -\iota.
\end{align*}
This finishes the proof. 
\end{proof}

\begin{rmk}\label{iota}
Our proof of Theorem~\ref{thm-A} shows that there is another recipe for computing $\iota$. 
We call an arc $(i,j)$ of $\Gamma$ {\it affine} if its shadow contains $\ap_0$, i.e., if 
$\beta(i,j)=\delta\pm(\esi_i-\esi_j)$. Then $o\in F(B)$ if and only if $\cE(B)$ contains an odd number of affine arcs. 
\end{rmk}

\begin{ex} \label{3}
Consider the graphs presented in Figure~\ref{ex3} and corresponding seaweeds. The first graph is a cycle and $o$ lies in its interior. This $\Gamma$ has $3$ affine arcs, namely $(0,9)$, $(0,3)$, and $(9,4)$. Hence 
$\iota=2$ and $\ind\bar\q=2-2=0$. 

The second graph has one segment and one cycle $B$ with $o\not\in F(B)$. Here $B$ has two affine arcs, $(7,2)$ and $(8,1)$.
We have $\iota=0$ and $\ind\bar\q=1+2=3$.

The third graph consists of two cycles. Both of them have $o$ in the interior. Hence
$\ind\bar\q=4-2=2$.   
\end{ex}

\begin{ex}\label{1}
As an illustration, we consider the case, where $|I|=|I'|=1$. 
Suppose $I=I'$. Then $\bar\q=\gt{sl}_n\oplus \bbk d$.
The graph $\Gamma$ has $\left\lfloor\frac{n}{2}\right\rfloor$ cycles and $n-2\!\left\lfloor\frac{n}{2}\right\rfloor$ segments.
The centre $o$ is not contained in $F(B)$ for any of the cycles. By Theorem~\ref{thm-A}, 
$\ind\bar\q=2\!\left\lfloor\frac{n}{2}\right\rfloor + n-2\!\left\lfloor\frac{n}{2}\right\rfloor=n=1+\ind\gt{sl}_n$. 

Suppose now that $I=\{0\}$ and $I'=\{d\}$ with $1\le d \le n/2$. 
The corresponding seaweed subalgebra is described in Example~\ref{ex-max}.
Consider two reflections $s,s'$ that are symmetries of the 
$n$-gon, where  $s(1)=n$ and $s'(d)=d+1$. 
  By the construction, two nodes of $\Gamma$ lie in the same connected component if and only if they lie in one and the same orbit of the group ${\tt H}=\left<s,s'\right>$. 
The composition  $s'{\circ}s$ 
is the rotation 
$i\mapsto i+2d \ (\!\!\!\!\mod n)$. The order of $s'{\circ}s$ is $\frac{n}{\gcd(n,2d)}$. 
It is easier to describe  the connected component of $\Gamma$ for even and odd $n$ separately. 

Suppose $n=2k$. Then $\Gamma$ has no segments and each cycle in $\Gamma$ has  
$|{\tt H}|=\frac{2k}{\gcd(k,d)}$ nodes. The number of cycles is therefore $\gcd(k,d)$.  
We obtain, $2{\cdot}\#\{\text{the cycles of}\ \Gamma\}=2\gcd(k,d)=\gcd(2k,2d)=\gcd(n,2d)$. 

Consider now $n=2k+1$. Here $\Gamma$ has one segment.
The segment contains $\frac{1}{2}|{\tt H}|=\frac{n}{\gcd(n,2d)}$ nodes. 
Each cycle in $\Gamma$ has  $|{\tt H}|$ nodes.
Hence there are 
$$
\left(n-\frac{n}{\gcd(n,2d)}\right)\frac{\gcd(n,2d)}{2n}=\frac{1}{2}(\gcd(n,2d)-1) 
$$
cycles in $\Gamma$. Thus, $2{\cdot}\#\{\text{the cycles of}\ \Gamma\} +
\#\{\text{the segments of $\Gamma$}\}=\gcd(n,2d)$. 

Now we determine the value of $\iota$ following the idea of Remark~\ref{iota}. 
There are $d$ affine arcs in $\Gamma$. All of them are inside arcs. They join 
$d$ with $d{+}1$, the node $d{-}1$ with $d{+}2$, and so on until the arc $(1,2d)$. Note that $2d\le n$. 
For a cycle $B$, the number of affine arcs in $B$ is equal to 
$|\cv(B)\cap\{1,2,\ldots,d\}|$. 

If $n$ is even and $d$ is odd, then there is at least one cycle with an odd number of affine arcs. Thus 
here $\iota=2$.  

Suppose that $n$ is odd. 
Set $a=\gcd(n,2d)=\gcd(n,d)$. Then $a|d$. For each cycle $B$, we have 
$|\cv(B)\cap\{1,2,\ldots,a\}|=2$ and hence $|\cv(B)\cap\{1,2,\ldots,d\}|=2\frac{d}{a}$ is even. Here $\iota=0$. 

The last case is $n,d\in 2\Z$. If $a=\gcd(n,2d)$ divides $d$, then again $|\cv(B)\cap\{1,2,\ldots,d\}|=2\frac{d}{a}$
for each cycle $B$ and $\iota=0$. Suppose that $a$ does not divide $d$. 
Since $a|2d$, we have $d=ca+\frac{a}{2}$ with $c\in\Z$. 
Here $|\cv(B)\cap\{1,2,\ldots,d\}|=2c+1$ for each cycle $B$ and $\iota=2$.

The uniform answer is: $\iota=0$ if $\gcd(n,2d)$ divides $d$ and $\iota=2$ otherwise. We have 
$\ind\bar\q=\gcd(n,2d)-\iota$. 
\end{ex}
 
\section{Affine meander graphs in type $\widetilde{{\sf C}}_r$} \label{secC}

In this section, $\g=\gt{sp}_{n}$ with $n=2r$. 
To proper subsets $S,S'\subset \widehat{\Pi}$ we associate a graph $\Gamma^{\sf C}(S,S')$. 
Similar to type {\sf A}, we put $n$ nodes on a 
circle, as vertices of a regular $n$-gon, labelling them consequently clockwise with the numbers from $1$ to $n$.  
The arcs of the circle are labeled with the roots $\ap_i\in \widehat{\Pi}$. 
To the arc $(n,1)$ we attach $\ap_0$ and to $(r,r+1)$ the root $\ap_r$. Each of the arcs
$(i,i+1)$ and $(2r-i,2r+1-i)$ is labeled with $\ap_i$ for $1\le i <r$. We illustrate this procedure in Figure~\ref{arcsC}. 
Let $o$ be the centre of the circle. 

\begin{figure}
 \trimbox{1cm 0.3cm 0cm 0cm}{
 \begin{tikzpicture}[scale=0.7]                   
  \draw (0,0) circle (3);
   \filldraw [gray] (0:3) circle (3pt)
( 36:3) circle (3pt) (2*36:3) circle (3pt)   (3*36:3) circle (3pt) 
 (4*36:3) circle (3pt)
   (5*36:3) circle (3pt)
    (6*36:3) circle (3pt)
       (7*36:3) circle (3pt)      circle (3pt) (8*36:3) circle (3pt)
       (9*36:3)    circle (3pt); 
\node(v1) at (-0.9,-3.3) {{\small $1$}};        \node(v10) at (1.1,-3.3) {{\small $n$}};   
\node(v2) at (-2.8,-1.8) {{\small $2$}};        \node(v9) at (3.4,-1.8) {{\small $n{-}1$}};   
\node(v3) at (-3.3,1.8) {{\small $r-1$}};        \node(v8) at (3.4,1.8) {{\small $r{+}2$}};   
\node(v4) at (-1,3.2) {{\small $r$}};        \node(v7) at (1.6,3.2) {{\small $r{+}1$}};   
\node(v5) at (0.1,3.3) {$\ap_r$}; 
\node(v6) at (-2.4,2.6) {$\ap_{r-1}$}; \node(v11) at (2.7,2.4) {$\ap_{r-1}$}; 
\node(v12) at (0.1,-3.3) {$\ap_0$}; 
\node(v13) at (-2,-2.7) {$\ap_{1}$}; \node(v14) at (2.1,-2.7) {$\ap_{1}$}; 
\node(v16) at (-3.2,-1) {$\ap_{2}$}; \node(v15) at (3.3,-1) {$\ap_{2}$}; 
 \end{tikzpicture}}
\caption{Arcs and simple roots in type {\sf C}.}  \label{arcsC}
\end{figure}
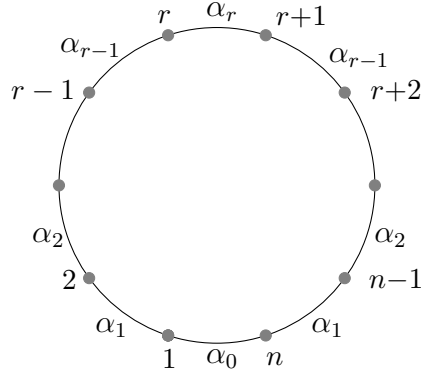

\subsection{Construction of the graph $\Gc(S,S')$} \label{GraphC}
First we remove the arcs labelled with $\ap_i$ such that $\ap_i\not\in S$. 
The circle becomes a union of connected components.  
Let $j_1,\ldots,j_m$ be the consecutive vertices in a connected component $C_k$. For each $1\le i\le m/2$, we draw an arc
$(j_i,j_{m+1-i})$
of $\Gamma^{\sf C}=\Gamma^{\sf C}(S,S')$, connecting the vertex $j_i$ with $j_{m+1-i}$. Each arc   $(j_i,j_{m+1-i})$ lies outside of the circle. 
The shadow of $(j_i,j_{m+1-i})$ is a segment of $C_k$ connecting $j_i$ with $j_{m+1-i}$. 
The construction applies to all connected components. The same procedure is repeated for $S'$, but the outside arcs are replaced here with inside arcs. 
We require that 
$o$ lies outside the area bounded by any arc of $\Gamma^{\sf C}$ and its shadow. 
The arcs are drawn in such a way that the graph has no self-intersections. 

\subsection{Properties of the graph $\Gc(S,S')$} \label{PropGraphC}
Each meander graph of type $\widetilde{{\sf C}}_r$ is also a meander graph of type 
$\widetilde{{\sf A}}_{2r-1}$.   Each meander graph of type 
$\widetilde{{\sf A}}_{2r-1}$ that is symmetric with respect to a reflection with no fixed vertices is 
a  meander graph of type $\widetilde{{\sf C}}_r$. We keep notation of Sections~\ref{GA} and \ref{d}. 

Let $\sigma\!:\BR^2\to\BR^2$ be the reflection such that $\sigma(1)=n$ for the vertices $1$ and $n$ on the circle. 
Without loss of generality, assume that $\Gamma^{\sf C}$ is $\sigma$-stable. 
Let $L$ be the axis of $\sigma$ and $\langle\sigma\rangle\subset{\rm O}(\BR^2)$ the subgroup of order $2$ generated by $\sigma$.

In the following lemma, we collect several observation on the structure of $\Gamma^{\sf C}$.

\begin{lm} \label{lmC}
{\sf (i)} Each connected component of\/ $\Gamma^{\sf C}$ is either a segment or a cycle. \\[.2ex]
{\sf (ii)} Let $(i,j)$ be an arc of\/ $\Gamma^{\sf C}$. Then either $(i,j)$ is $\sigma$-stable and
$i+j=n+1$ or $(i,j)\cap L =\varnothing$. \\[.2ex]
{\sf (iii)} Let $B$ be a connected component of $\Gamma^{\sf C}$. Then either $B$ is $\sigma$-stable  or $B\cap L =\varnothing$. \\[.2ex]
{\sf (iv)} Let $B$ be a $\sigma$-stable connected component of $\Gamma^{\sf C}$. If $B$ is a segment, then exactly one arc of $B$ intersects $L$. If $B$ is a cycle, then exactly two different arcs of $B$  intersect $L$.
\end{lm}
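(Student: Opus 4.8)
The plan is to exploit the $\sigma$-symmetry of $\Gamma^{\sf C}$ together with the geometric fact that $\sigma$ is a reflection with no fixed vertices, whose axis $L$ passes through the centre $o$. Throughout I treat each statement as a consequence of how arcs are built in Section~\ref{GraphC}, using that each arc $(i,j)$ together with its shadow avoids $o$.

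\emph{Part \textsf{(i)}.} First I would argue exactly as in type $\widetilde{\sf A}$: by construction each vertex of $\Gamma^{\sf C}$ is an endpoint of at most one outside arc (coming from $S$) and at most one inside arc (coming from $S'$), so every vertex has degree at most $2$. A connected graph with maximal degree $\le 2$ and no loops is either a path (segment) or a cycle, which is the assertion.

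\emph{Part \textsf{(ii)}.} The key point is that the axis $L$ meets the circle in two arc-midpoints, namely the midpoints of the circle-arcs $(n,1)$ and $(r,r+1)$, since $\sigma(1)=n$ and hence $\sigma(r)=r+1$. An arc $(i,j)$ of $\Gamma^{\sf C}$ is drawn in a half-plane that, together with its shadow, excludes $o\in L$; therefore the chord $(i,j)$ can cross $L$ only on the $o$-free side, i.e.\ at most once and only if $\sigma$ swaps its two endpoints. Since $\Gamma^{\sf C}$ is $\sigma$-stable, the image $\sigma(i,j)$ is again an arc; if $(i,j)$ is not $\sigma$-stable then $(i,j)$ and $\sigma(i,j)$ are distinct arcs lying in mirror-image positions and, because a single arc with its shadow avoids $o$, neither can reach across $L$, giving $(i,j)\cap L=\varnothing$. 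If instead $(i,j)$ is $\sigma$-stable, then $\sigma$ must interchange its endpoints (it fixes no vertex), forcing $\sigma(i)=j$, and since $\sigma$ is the reflection with $\sigma(a)=n+1-a$ on vertex labels this gives $i+j=n+1$.

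\emph{Parts \textsf{(iii)} and \textsf{(iv)}.} For \textsf{(iii)}, I would note that $\sigma$ permutes the connected components of $\Gamma^{\sf C}$; a component $B$ is either mapped to itself ($\sigma$-stable) or to a disjoint component $\sigma(B)\ne B$. In the latter case every arc of $B$ is carried to an arc of $\sigma(B)$, so by \textsf{(ii)} no arc of $B$ meets $L$, and no vertex of $B$ lies on $L$ either (vertices on $L$ would be $\sigma$-fixed, impossible); hence $B\cap L=\varnothing$. For \textsf{(iv)}, suppose $B$ is $\sigma$-stable. Any arc of $B$ meeting $L$ is $\sigma$-stable by \textsf{(ii)}, and each such arc crosses $L$ exactly once (transversally, on the $o$-free side). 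The reflection $\sigma$ acts on $B$ as a graph involution fixing no vertex, so when $B$ is a segment its unique fixing pattern is the central edge flipped onto itself: a path with no fixed vertex under its order-$2$ symmetry has an even number of vertices and a single middle edge swapped end-for-end, which is the one arc crossing $L$. When $B$ is a cycle, the same fixed-point-free involution on a cycle reverses orientation and interchanges two ``antipodal'' edges with themselves, producing exactly two $\sigma$-stable arcs, each crossing $L$ once.

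I expect the main obstacle to be \textsf{(iv)}: one must pin down precisely how a fixed-point-free reflection of a finite path or cycle acts on its edges, and translate ``$\sigma$-stable arc'' into ``arc crossing $L$''. The cleanest route is to count fixed structures combinatorially (a free $\mathbb{Z}/2$-action on a path forces an even vertex count with a unique swapped central edge; on a cycle it forces an even vertex count with exactly two swapped edges), and then invoke \textsf{(ii)} to identify the swapped arcs as exactly those meeting $L$. The geometric ``$o$ avoids every arc--shadow region'' hypothesis is what guarantees each such crossing is single and transversal, so no double-counting occurs.
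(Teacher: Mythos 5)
Your overall architecture --- degree count for {\sf (i)}, the dichotomy {\sf (ii)}, the orbit argument for {\sf (iii)}, and a symmetry analysis of a $\sigma$-stable component for {\sf (iv)} --- is reasonable, and {\sf (i)} and {\sf (iii)} are done essentially as in the paper (which dismisses {\sf (i)}--{\sf (ii)} as immediate from the construction). But twice you give a reason that does not support the claim. In {\sf (ii)}, you justify ``a non-$\sigma$-stable arc cannot meet $L$'' by the fact that an arc together with its shadow avoids $o$. That is not enough: an inside arc joining two vertices on the same side of $L$ could bulge across the diameter $L$ far away from $o$, crossing $L$ twice while its arc--shadow region still avoids $o$. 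What actually forbids this is the rule that arcs of $\Gamma^{\sf C}$ meet only at vertices: a point $p\in (i,j)\cap L$ satisfies $\sigma(p)=p$, hence $p$ also lies on the arc $\sigma((i,j))$, which belongs to $\Gamma^{\sf C}$ by $\sigma$-stability; since no vertex lies on $L$, this forces $(i,j)=\sigma((i,j))$, and then $j=\sigma(i)=n+1-i$ because $\sigma$ fixes no vertex.

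The more serious gap is in {\sf (iv)}: your proposed ``cleanest route'' rests on a false combinatorial statement. A free $\Z/2$-action on a cycle does \emph{not} force exactly two swapped edges --- the antipodal rotation of an even cycle is fixed-point-free on vertices and stabilises no edge at all. So pure counting of fixed structures of a free involution cannot work; you must rule out the rotation, which is precisely the content of your unproved aside that $\sigma$ ``reverses orientation'' on $B$. Two ways to close this: (a) show that a connected $\sigma$-stable component must meet $L$ (if $B\cap L=\varnothing$, then $B$ is the disjoint union of its two nonempty open parts on either side of $L$, contradicting connectedness); by {\sf (ii)} some arc of $B$ is then $\sigma$-stable, which rules out the rotation and leaves the edge-reflection, with exactly two stabilised edges on a cycle and one on a path; or (b) argue that a planar reflection preserving the Jordan curve $B$ and its bounded face reverses the induced orientation of $B$. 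The paper avoids the classification of involutions altogether: it starts from a $\sigma$-stable arc $(i,j)$ and traverses the consecutive vertices of $B$ lying on one side of $L$ until it reaches either a leaf (then $B$ is a segment with exactly one $L$-crossing arc) or an arc returning across $L$, which by {\sf (ii)} is again $\sigma$-stable (then $B$ is a cycle with exactly two such arcs). Adopting either fix (a), (b), or this traversal would make your argument complete.
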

\begin{proof}
Statements {\sf (i)} and {\sf (ii)} are direct consequences of the construction rules. 

Suppose $B\cap L\ne\varnothing$. Then there is $(i,j)\in\cE(B)$ such that $(i,j)\cap L\ne\varnothing$. By part {\sf (ii)},
the arc $(i,j)$ is $\sigma$-stable. Hence $B\cap \sigma(B)\ne\varnothing$ and $\sigma(B)=B$. This proves part {\sf (iii)}. 

By {\sf (iii)} and {\sf (ii)}, 
a $\sigma$-stable component $B$ contains a $\sigma$-stable arc $(i,j)$. 
Assume that $j>r$. Then $i=2r+1-j\le r$. Let $i_1=i, i_2=j, i_3, \ldots, i_m$ be the maximal sequence of consecutive vertices of $B$ such that 
$i_k>r$ for $2\le k\le m$. If $i_m$ is a leaf (adjacent to only one arc), then $B$ is a segment. Furthermore,
$\cv(B)=\{\sigma(i_m),\ldots,\sigma(i_3), i,j,i_3, \ldots, i_m\}$ and exactly one arc of $B$ intersects $L$. 

Suppose that $i_m$ is not a leaf. If $\cv(B)=\{i,j\}$, then $B$ has exactly two different arcs joining $i$ and $j$.
Both of them intersect $L$. If $|\cv(B)|>2$, then there is $(i_m,i_{m+1})\in\cE(B)$, where  
$i_{m+1}=\sigma(i_m)\le r$ and $i_{m+1}\ne i$. The vertices 
$\{\sigma(i_m),\ldots,\sigma(i_3), i,j,i_3, \ldots, i_m\}$ belong to a cycle, which has to be equal to $B$. We see that only the arcs 
$(i,j)$ and $(i_m,\sigma(i_m))$ intersect $L$. Now part {\sf (iv)} is settled.  
\end{proof}

\subsection{Interpretation of the index} \label{sec-C-ind}
If we remove from the circle the arcs labeled with $\ap_i\not\in S$, then there is a bijection 
between the $\sig$-orbits on the connected components 
and the simple non-Abelian  ideals of
the Levi  subalgebra $\gt l(S)\subset\wg$.  Furthermore, there is
a bijection between  the $\sig$-orbits on the  outside arcs of $\Gc$ and $\cK(S)$. Similarly, 
 there is
a bijection between  the $\sig$-orbits on the inside arcs of $\Gc$ and $\cK(S')$. To each arc $(i,j)$ of $\Gc$ we assign 
$\beta(i,j)\in \cK(S)\cup \cK(S')$, which is 
the sum
of simple roots belonging to the shadow of $(i,j)$. If $(u,v)=\sigma((i,j))$, then 
$\beta(u,v)=\beta(i,j)$. 

Recall that 
in type ${\sf C}_r$, we have a standard basis $\{\esi_1,\ldots,\esi_r\}\subset\gt t^*$. 
Assume that $\esi_i(d)=\esi_i(C)=0$ for each $i$.
Then $\ap_i=\esi_{i}-\esi_{i+1}$ for $1\le i < r$ and $\ap_r=2\esi_r$. Furthermore,
$\ap_0=\delta-2\esi_1$, where 
$\delta=\ap_0+\ap_r+2\sum_{i=1}^{r-1}\ap_i$ is an imaginary root with $\delta(d)=1$.
By the construction, 
\[
\beta(i,n+1-i) \in \{ 2\esi_i, \delta-2\esi_i\}  \ \text{ if } \ 1\le i\le r
\]
and $\beta(i,j)=\esi_i-\esi_j$ for $1\le i<j\le r$, cf.~\eqref{eq:K-Pi-gl},~\eqref{eq:K-Pi-sp}. 

\begin{thm} \label{thmC}
Let $\wq=\wq(S,S')$ be a  standard finite-dimensional seaweed subalgebra of $\wg$ for $\gt g=\gt{sp}_n$.
Set $\bar\q=\wq/\bbk C$. 
 Then 
$$
\ind\bar\q= 1+ \#\{\text{the cycles of}\ \Gamma^{\sf C}\} + \frac{1}{2}\#\{\text{the non-$\sigma$-stable segments of $\Gamma^{\sf C}$}\}-\iota
$$ 
for 
$\Gamma^{\sf C}=\Gamma^{\sf C}(S,S')$, where $\iota=0$ if\/ $\Gamma^{\sf C}$ has no cycles with $o$ in the interior and 
$\iota=2$ otherwise. 
\end{thm}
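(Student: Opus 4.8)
The plan is to mimic the proof of Theorem~\ref{thm-A}, starting from the TYJ formula~\eqref{eq:tau-yu}, but to work throughout with $\sig$-orbits of arcs and components, since by the discussion in Section~\ref{sec-C-ind} it is these orbits, not the individual arcs, that biject with $\cK(S)$ and $\cK(S')$. First I would compute $\dim E_S+\dim E_{S'}$ as the total number of $\sig$-orbits of arcs of $\Gc$. Using Lemma~\ref{lmC}, the arcs split into $\sig$-stable ones (which are fixed points of the $\sig$-action on arcs, each its own orbit) and non-$\sig$-stable ones (which pair up, so two arcs per orbit). Counting vertices against arcs component-by-component, and tracking how many $\sig$-stable arcs each $\sig$-stable component carries via Lemma~\ref{lmC}{\sf (iv)} (one for a $\sig$-stable segment, two for a $\sig$-stable cycle), I would express $\dim E_S+\dim E_{S'}$ in terms of $n$ and the numbers of segments and cycles of the various $\sig$-types.

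Next I would compute $\dim\big((E_S+E_{S'})|_{\gt t}\big)$ by describing minimal linear relations among the elements of $\cK(S,S')|_{\gt t}$. As in Lemma~\ref{ind} and Lemma~\ref{round}, the relations come one per cycle; the analogue of Lemma~\ref{ind} (linear independence along a segment) should go through with the type-{\sf C} labels $\esi_i-\esi_j$ and $2\esi_i$, and the cycle relation $Y(B)=0$ of Lemma~\ref{round} for a cycle not enclosing $o$ transfers verbatim since it is purely a rotation-angle argument about shadows on the circle. The subtlety is that here the weights involve $2\esi_i$ rather than only differences, so I must recheck that a $\sig$-stable cycle contributes exactly one independent relation; the two $L$-crossing arcs of such a cycle (Lemma~\ref{lmC}{\sf (iv)}) are where the $2\esi_i$-type weights enter, and I would verify the relation still holds and is unique. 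Subtracting then gives $\dim\big((E_S+E_{S'})|_{\gt t}\big)=\dim E_S+\dim E_{S'}-\#\{\text{cycles of }\Gc\}$, where ``cycles'' are counted as $\sig$-orbits.

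Then I would handle the $\delta$-correction exactly as in Theorem~\ref{thm-A}: set $X(B)=\sum_{\e\in\cE(B)}c_\e\beta(\e)$ for each cycle and show $X(B)\in\BR\delta$ with $X(B)\neq 0$ iff $o\in F(B)$, using the half-line $C$ through the arc $(n,1)$ and counting crossings, or equivalently counting affine arcs as in Remark~\ref{iota}. This yields $\dim(E_S+E_{S'})=\dim\big((E_S+E_{S'})|_{\gt t}\big)+\tfrac12\iota$ with $\iota$ as stated. Substituting everything into~\eqref{eq:tau-yu} and simplifying with $|\widehat\Pi|=r+1$ should collapse to the claimed formula. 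The main obstacle I anticipate is bookkeeping in the first step: correctly translating the vertex-edge count and the $\sig$-orbit count into the combination $1+\#\{\text{cycles}\}+\tfrac12\#\{\text{non-$\sig$-stable segments}\}$, in particular tracking the contribution of the $\sig$-stable component containing $L$ and the fate of the two central arcs labelled $\ap_0$ and $\ap_r$, where the extra ``$+1$'' in the formula (reflecting $\dim\gt t$ being $r$ rather than the type-{\sf A} shift) must be accounted for carefully.
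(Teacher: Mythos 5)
Your proposal is correct and follows essentially the same route as the paper's proof: the TYJ formula, the component-by-component count of $\sig$-orbits of arcs against vertices via Lemma~\ref{lmC}{\sf (iv)}, one independent relation per $\sig$-orbit of cycles, the $\delta$-correction yielding $\iota$, and the final substitution with $|\widehat\Pi|=r+1$. The only cosmetic difference is in the $\iota$-step: where you invoke the crossing-count/rotation argument of Theorem~\ref{thm-A} (which does transfer, since a $\Gc$-cycle has at most two affine arcs), the paper instead runs a short algebraic case analysis showing that when both $L$-crossing arcs of a $\sigma$-stable cycle are affine the two $\delta$-contributions cancel, so $\delta\in\langle K(B)\rangle_{\BR}$ exactly when $o\in F(B)$.
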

\begin{proof}
We use~\eqref{eq:tau-yu}. The relation between the arcs of $\Gamma^{\sf C}$ and the elements of $\cK(S)\cup\cK(S')$ shows that 
$\dim E_S +\dim E_{S'}$ is equal to the number of $\sig$-orbits on the arcs of $\Gamma^{\sf C}$. Let this number be $a$. 
Note that $|\Pi|=r$ is the number of $\sig$-orbits on the vertices of $\Gc$. 
We interpret  $r-a$ in terms of the graph. 

Let $B$ be a connected component of $\Gc$. Suppose first that $B$ is a cycle. If $B$ is not $\sigma$-stable, then 
$\sig$ has $|\cE(B)|=|\cv(B)|$ orbits on $\cE(B)\sqcup \cE(\sigma(B))$ and on $\cv(B)\sqcup\cv(\sigma(B))$.  
If $\sigma(B)=B$, then $\sig$ has 
$$
2+\frac{1}{2}\left(|\cE(B)|-2\right) = 1+\frac{1}{2}|\cE(B)|
$$ 
orbits on $\cE(B)$ by Lemma~\ref{lmC}\,{\sf (iv)}. The number of $\sig$-orbits on $\cv(B)$ is 
$\frac{1}{2}|\cv(B)|=\frac{1}{2}|\cE(B)|$. 
 
Suppose now that $B$ is a segment. If $B$ is not $\sigma$-stable, then  
$\sig$ has $|\cE(B)|=|\cv(B)|-1$ orbits on $\cE(B)\sqcup \cE(\sigma(B))$ and 
$|\cv(B)|$ orbits  
on $\cv(B)\sqcup\cv(\sigma(B))$.  
If $\sigma(B)=B$, then $\sig$ has 
$$
1+\frac{1}{2}\left(|\cE(B)|-1\right) = \frac{1}{2}\left(|\cE(B)|+1\right)
$$ 
orbits on $\cE(B)$ by Lemma~\ref{lmC}\,{\sf (iv)}. The number of $\sig$-orbits on $\cv(B)$ is the same 
$\frac{1}{2}|\cv(B)|=\frac{1}{2}(1+|\cE(B)|)$. 
Summing up, 
\beq \label{part1}
r-a = \frac{1}{2}\#\{\text{the non-$\sigma$-stable segments of $\Gamma^{\sf C}$}\} -  \#\{\text{the $\sigma$-stable cycles of}\ \Gamma^{\sf C}\}. 
\eeq

With obvious changes we repeat the strategy of the proof of Theorem~\ref{thm-A}.
For each connected component $B$ of $\Gamma^{\sf C}$, we have 
$\bar K(B)\subset \langle \esi_i \mid i \in \cv(B)\cup \cv(\sigma(B)), 1\le i\le r\rangle_{\BR}$. 
Hence  a minimal relation among  elements 
of $\cK(S,S')|_{\gt t}$ is (up to signs) a non-trivial linear dependence among 
$\bar\beta(\e)$ with $\e\in \cE(B)$ for some $B$.
If $B$ is a segment, then there are no such dependences. 
 If $B$ is a cycle, then there is one dependence. Since $\bar K(B)=\bar K(\sigma(B))$, the dimension of $(E_S+E_{S'})|_{\gt t}$ is equal to 
\[
\dim E_S + \dim E_{S'} - \#\{\text{the $\sigma$-stable cycles of}\ \Gc\} -\frac{1}{2} \#\{\text{the non-$\sigma$-stable cycles of}\ \Gc\}.
\]
It remains to compare $\dim(E_S+E_{S'})$ and 
$\dim((E_S+E_{S'})|_{\gt t})$. These dimensions are equal if $\delta\not\in E_S+E_{S'}$ and differ by $1$ otherwise. 
Note that $\delta\in E_S+E_{S'}$ if and only if there is a cycle $B$ in $\Gc$ such that 
$\delta\in\langle K(B)\rangle_{\BR}$. 

We say that an arc $(i,\sigma(i))$ with $1\le i\le r$ of $\Gc$ is {\it affine}, if $\beta(i,j)=\delta-2\esi_i$. 
Recall that $\beta(i,j)=\pm(\esi_i-\esi_j)$, if $j\ne \sigma(i)$ and $i\le r$. 
By Lemma~\ref{lmC}, a $\sigma$-stable cycle $B$ contains at most two affine arcs and 
a non-$\sigma$-stable cycle $\tilde B$ contains no affine arcs.  Here $o\not\in F(\tilde B)$ and 
$\delta\not\in \langle K(\tilde B)\rangle_{\BR}$. 

Suppose that $o\in F(B)$. Then one of the $\sigma$-stable arcs of $B$ is affine and another one is not. 
Here $\delta\in \langle K(B)\rangle_{\BR}$. Suppose  that $o\not\in F(B)$.  Then either both $\sigma$-stable arcs of $B$ are not affine and
then clearly $\delta\not\in \langle K(B)\rangle_{\BR}$ or both of them are affine. We consider the latter case. 
Let $i_1,i_2,\ldots,i_m,\sigma(i_m),\ldots,\sigma(i_2),\sigma(i_1)$ be the consecutive vertices of $B$
with $i_j\le r$ for each $j$. Then a relation among $\bar\beta(\e)$ with $\e\in\cE(B)$   looks as follows
\[
2\esi_{i_1} + 2(\esi_{i_2}-\esi_{i_1})+\ldots +2(\esi_{i_m}-\esi_{i_{m-1}})-2\esi_{i_m}=0.
\]
If we replace $2\esi_{i_1}$ with $2\esi_{i_1}-\delta$ and $2\esi_{i_m}$ with $2\esi_{i_m}-\delta$, the result is still zero.
Thus, $\delta\not\in  \langle K(B)\rangle_{\BR}$. 
 
Summing up, $\dim(E_S+E_{S'})=\dim((E_S+E_{S'})|_{\gt t}) + \frac{1}{2}\iota$. We rewrite~\eqref{eq:tau-yu} using~\eqref{part1} and obtain  
\begin{align*}
& \ind\bar\q=1+r+\dim E_S +\dim E_{S'}-2\!\left(\!\dim((E_S+E_{S'})|_{\gt t}) + \frac{\iota}{2}\right)\!=  1{+}r-\!(\dim E_S +\dim E_{S'})\, + \\
 & + 
2\#\{\text{the $\sigma$-stable cycles of}\ \Gc\} + \#\{\text{the non-$\sigma$-stable cycles of}\ \Gc\}-\iota = \\
  &
 = 1 + \frac{1}{2}\#\{\text{the non-$\sigma$-stable segments of $\Gamma^{\sf C}$}\} -  \#\{\text{the $\sigma$-stable cycles of}\ \Gamma^{\sf C}\} + \\
& +  
2\#\{\text{the $\sigma$-stable cycles of}\ \Gc\} + \#\{\text{the non-$\sigma$-stable cycles of}\ \Gc\}-\iota = \\ 
& =   1+  \frac{1}{2}\#\{\text{the non-$\sigma$-stable segments of $\Gamma^{\sf C}$}\}  +  \#\{\text{the cycles of}\ \Gc\} -\iota.
\end{align*}
This finishes the proof. 
\end{proof}

\begin{ex}\label{Cmax}
We consider the case of maximal parabolics, where $S=\widehat{\Pi}\setminus\{\ap_i\}$ and $S'=\widehat{\Pi}\setminus\{\ap_j\}$. 
Let $\Gc=\Gc(S,S')$ be the corresponding graph. 
Each arc of $\Gc$ connects a vertex $u$ with $\sigma(u)$. Thereby $\Gc$ has 
$r$ cycles, where each cycle has two vertices and two $\sigma$-stable arcs. If $i=j$, then 
there are no cycles with $o$ in the interior and $\ind\bar\q=r+1$. In this case, $\bar\q$ is isomorphic to 
$\gt{sp}_{2i}\oplus\gt{sp}_{n-2i}\oplus\bbk$. If $i\ne j$, there are  cycles with $o$ in the interior  and 
$\ind\bar\q=r-1$.  
\end{ex}

\section{A glance at the finite-dimensional case}  \label{fin}

For $\gt g=\gt{sl}_n$, a formula for the index of a seaweed subalgebra $\gt q\subset\g$ in terms of the corresponding 
type-{\sf A} meander graph is obtained in \cite{dk00}.
In \cite{SW-C,mD}, a similar interpretation of $\ind\q$ is given for $\gt{sp}_{2n}$ and $\gt{so}_n$.
We explain how  the TYJ formula, cf~\eqref{eq:tau-yu}, provides a proof for the 
combinatorial statements  in all classical  types,

If $\gt g$ is of type {\sf A} or {\sf C}, then one just simplifies the arguments of Theorems~\ref{thm-A},~\ref{thmC}. 
We have $\ap_0\not\in S$ and $\ap_0\not\in S'$. 
There are no affine arcs, the circle is replaced by a line and its centre disappears.  In particular, in type {\sf A}, each 
cycle in the graph gives rise to a relation among the elements of $\cK(S)\cup-\cK(S')$.

\begin{ex}   \label{ex:old-A} 
Suppose that $n=9$ and $\gt g=\gt{sl}_9$. Choose $S=\{\ap_1,\ap_2,\ap_3,\ap_4,\ap_6,\ap_8\}$ and 
$S'=\{\ap_1,\ap_3,\ap_4,\ap_5,\ap_7,\ap_8\}$. Then
$\bar{\q}=\gt q(S,S')\oplus\bbk d$, where $\gt q(S,S')=:\gt q$ is a seaweed in $\gt g$. 
Clearly $\ind\bar\q=1+\ind\gt q$. 
The type-{\sf A} meander graph $\Gamma_{\sf fin}$ of $\q$ is shown in Figure~\ref{fin-dim}, 
below the dotted line we indicate elements of $\Pi\setminus S$ and above those of $\Pi\setminus S'$. 
\begin{figure}
\setlength{\unitlength}{0.043in}
\raisebox{-12\unitlength}{%
\begin{picture}(96,30)(-5,-11)
\multiput(0,3)(10,0){9}{\circle*{2}}
\put(20,5){\oval(40,20)[t]}
\put(20,5){\oval(20,10)[t]}
\put(55,5){\oval(10,5)[t]}
\put(75,5){\oval(10,5)[t]}
\put(5,1){\oval(10,5)[b]}
\put(35,1){\oval(30,15)[b]} 
\put(35,1){\oval(10,5)[b]}
\put(70,1){\oval(20,10)[b]}
\qbezier[200](0,3),(45,3),(80,3)
{\color{blue}
\put(42.9,-0.5){$\ap_5$}
\put(63.4,-0.5){$\ap_7$}
\put(12.9,4.5){$\ap_2$}
\put(52.9,4.5){$\ap_6$}
}
\end{picture} }
\caption{A graph in a finite-dimensional setting.} \label{fin-dim} 
\end{figure}
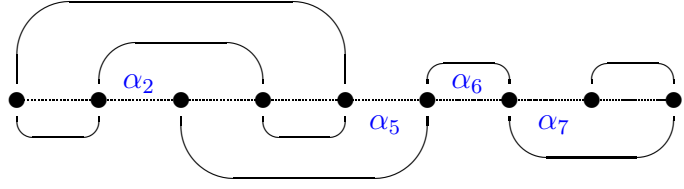
We see that 
\[
\cK(S)=\{\esi_1-\esi_5,\esi_2-\esi_4,\ap_6,\ap_8\} \ \ \text{ and } \ \ 
\cK(S')=\{\ap_1,\esi_3-\esi_6,\ap_4,\esi_7-\esi_9\}.
\] 
There is a natural bijection between $\cK(S)$ and the above arcs, as well as between $\cK(S')$ and the below arcs. 
The unique cycle in the graph gives rise to a relation 
\[
(\esi_1-\esi_5)-\ap_4-(\esi_2-\esi_4)-\ap_1=0
\]
among  elements of $\cK(S)\cup-\cK(S')$.
The TYJ formula asserts that $\ind\gt q$ is equal to 
\begin{align*}
& (n-1)+\#\{\text{the arcs in $\Gamma_{\sf fin}$}\}-2(\#\{\text{the arcs in $\Gamma_{\sf fin}$}\}-
\#\{\text{the cycles in $\Gamma_{\sf fin}$}\})= \\
 & = \#\{\text{the segments of $\Gamma_{\sf fin}$}\} +2\{\text{the cycles in $\Gamma_{\sf fin}$}\} -1.
\end{align*}
This is exactly the statement of \cite{dk00}. In our example, $\ind\gt q=2$.
\end{ex}

\subsection{The orthogonal case} \label{so-fin} Let $\gt q=\gt q(S,S')\subset\gt g$ be a standard seaweed defined by the subsets $S,S'\subset\Pi$. 
A type-{\sf B} meander graph $\Gamma^{\sf B}_{\sf fin}=\Gamma^{\sf B}_{\sf fin}(S,S')$ is constructed by the same principle as in type {\sf C}. 
For $\gt g=\gt{so}_{2r+1}$, 
we put $n=2r$ nodes on a horizontal line.  The arcs of the line are labeled with the simple roots 
$\ap_1,\ap_2,\ldots,\ap_{r-1},\ap_r,\ap_{r-1},\ldots,\ap_1$. 
The graph $\Gamma^{\sf B}_{\sf fin}$ has edges above and below the horizontal line. 

In order to construct the above edges,  we remove the arcs labelled with $\ap_i$ such that $\ap_i\not\in S$ from the line.
The interval $[1,n]$  becomes a union of connected components.  
Let $j_1,\ldots,j_m$ be the consecutive vertices in a connected component $C_k$. For each $1\le i\le m/2$, we draw an edge
$(j_i,j_{m+1-i})$.
The shadow of $(j_i,j_{m+1-i})$ is a segment of $C_k$ connecting $j_i$ with $j_{m+1-i}$
and $\beta(j_i,j_{m+1-i})\in\gt t^*$ is the sum of simple roots belonging to this segment.  
The same procedure applies to  $S'$ and produces the below edges. 
The resulting graph is stable with respect to the reflection $\sigma$, which sends a vertex $i$ to
$n+1-i$.  
Set 
\[\cL(S)=\{\beta(\e) \mid \e \text{ is an above edge of } \Gamma^{\sf B}_{\sf fin}\} \ \ \text{ and } \ \
\cL(S')=\{\beta(\e) \mid \e \text{ is a below edge of } \Gamma^{\sf B}_{\sf fin}\}.
\]
If $\{\ap+{r-1},\ap_r\}\subset S$, then $\cL(S)\ne\cK(S)$, but in any case $\langle\cL(S)\rangle_{\BR} =\langle\cK(S)\rangle_{\BR}$ and 
similarly $\langle\cL(S')\rangle_{\BR} =\langle\cK(S')\rangle_{\BR}$.  
Using the TYJ formula and an argument similar to the proof of Theorem~\ref{thmC}, we obtain 
\[
\ind\gt q= \#\{\text{the cycles of}\ \Gamma^{\sf B}_{\sf fin}\}+\frac{1}{2}\#\{\text{the non-$\sigma$-stable segments of $\Gamma^{\sf B}_{\sf fin}$}\}.
\]
This is the answer of \cite{SW-C}. 

A Dynkin diagram of type {\sf D}$_r$ with $r\ge 4$ has a branching point and this makes a combinatorial interpretation of $\ind\gt q$ much more 
complicated.  Several  type-{\sf D} meander graphs have a crossing (edges meeting not at a vertex). 

The argument of \cite{mD} relies on a reduction to either a parabolic or $\gt q_{\sf ec}:=\gt q(S,S')$, where 
$S=\Pi\setminus\{\ap_r\}$ and $S'=\Pi\setminus\{\ap_{r-1}\}$. The index of a parabolic is computed with the help of the TYJ formula and that computation is quite involved, see \cite[Lemma~4.3]{mD}. The same lemma states that $\ind\q_{\sf ec}=r-2$. The proof of the second statement uses  the  Ra{\"i}s formula for the index of semi-direct products \cite{r}. 
As we show now, it can be replaced by the TYJ formula. 

\begin{ex}\label{ec}
Suppose that $\gt g$ is of type ${\sf D}_r$. 
If $S=\Pi\setminus\{\ap_r\}$ and $S'=\Pi\setminus\{\ap_{r-1}\}$, then 
$|\cK(S)|=|\cK(S')|=\left\lfloor \frac{r}{2}\right\rfloor$. Furthermore,
\[
\cK(S)\cup\cK(S') =\cK(S) \sqcup \{\ap_1+\ap_2+\ldots+\ap_{r-2}+\ap_r\}.
\]
Hence $\ind\q_{\sf ec}=r+2\left\lfloor \frac{r}{2}\right\rfloor-2(\left\lfloor \frac{r}{2}\right\rfloor+1)=r-2$.
\end{ex} 

The TYJ formula can replace also the reduction argument of \cite{mD}. This line of reasoning is quite involved 
and we do not give details here.

\end{document}